\newcommand{\multiline}[1]{%
  \begin{tabularx}{\dimexpr\linewidth-\ALG@thistlm}[t]{@{}X@{}}
    #1
  \end{tabularx}
}
\let\temp\phi
\let\phi\varphi
\let\varphi\temp
\newcolumntype{x}[1]{>{\centering\arraybackslash\hspace{0pt}}p{#1}}
\theoremstyle{plain}
\newtheorem{theorem}{Theorem}
\newtheorem{problem}{Problem}
\newtheorem{definition}{Definition}
\newtheorem{proposition}{Proposition}
\newtheorem{lemma}{Lemma}
\newtheorem{conjecture}{Conjecture}
\declaretheorem[style=definition,name=Example,qed=$\square$]{example}
\title{Orthogonal Decomposition of Tensor Trains}
\author[]{Karim Halaseh}
\author[]{Tommi Muller}
\author[]{Elina Robeva}
\affil[]{\normalsize\textit{The University of British Columbia}}
\date{}
\begin{document}

\maketitle
\begin{abstract}
In this paper we study the problem of decomposing a given tensor into a tensor train such that the tensors at the vertices are orthogonally decomposable. When the tensor train has length two, and the orthogonally decomposable tensors at the two vertices are symmetric, we recover the decomposition by considering random linear combinations of slices. Furthermore, if the tensors at the vertices are symmetric and low-rank but not orthogonally decomposable, we show that a whitening procedure can transform the problem into the orthogonal case. When the tensor network has length three or more and the tensors at the vertices are symmetric and orthogonally decomposable, we provide an algorithm for recovering them subject to some rank conditions. Finally, in the case of tensor trains of length two in which the tensors at the vertices are orthogonally decomposable but not necessarily symmetric, we show that the decomposition problem reduces to the novel problem of decomposing a matrix into an orthogonal matrix multiplied by diagonal matrices on either side. We provide and compare two solutions, one based on Sinkhorn's theorem and one on Procrustes' algorithm. We conclude with a multitude of open problems in linear and multilinear algebra that arose in our study.

%In this paper we study the problem of recovering a tensor train decomposition of a given tensor $\mathcal{T}$ in which the tensors at the vertices of the train are orthogonally decomposable. Specifically, we consider tensor networks in the form of tensor trains (aka matrix product states). When the tensor train has length 2, and the orthogonally decomposable tensors at the two vertices of the network are symmetric, we show how to recover the decomposition by considering random linear combinations of slices. Furthermore, if the tensors at the vertices are symmetric but not orthogonally decomposable, we show that a whitening procedure can transform the problem into an orthogonal one, thereby yielding a solution for the decomposition of the tensor. When the tensor network has length 3 or more and the tensors at the vertices are symmetric and orthogonally decomposable, we provide an algorithm for recovering them subject to some rank conditions. Finally, in the case of tensor trains of length two in which the tensors at the vertices are orthogonally decomposable but not necessarily symmetric, we show that the decomposition problem reduces to the problem of a novel matrix decomposition, that of an orthogonal matrix multiplied by diagonal matrices on either side. We provide two solutions for the full-rank tensor case using Sinkhorn's theorem and Procrustes' algorithm respectively, and show that the Procrustes-based solution can be generalized to any rank case. We conclude with a multitude of open problems in linear and multilinear algebra that arose in our study.
\end{abstract}

\unmarkedfntext{\noindent \hspace{-0.33cm} Keywords: Tensor decompositions, CP decomposition, Orthogonally decomposable tensors, Tensor networks, Tensor trains, Matrix decompositions, Sinkhorn's algorithm, Procrustes problem

MSC2020 Subject Classification: 15A69, 15A29, 15B10}

% \tableofcontents

\section{Introduction}

\subfile{introduction}

\section{Background}\label{sec:background}

In this section we provide the necessary background and notation on tensors, tensor decompositions, tensor networks, and orthogonally decomposable tensors.
\subsection{Tensors}
We denote the set $\{1,...,n\}$ as $[n]$, for $n \in \mathbb{N}$. A $d$-tensor with  real entries is an element in $\mathbb R^{n_1\times\cdots\times n_d}$ for some $n_1,\ldots, n_d\in\mathbb N$. Scalars, vectors, and matrices are $0$-, $1$-, and $2$-tensors, respectively. We denote scalars by lowercase letters (e.g. $\lambda$), vectors by bolded lowercase letters (e.g. $\mathbf{u}$), matrices by bolded uppercase letters (e.g. $\mathbf{U}$), and $d$-tensors by script uppercase letters (e.g. $\mathcal{T}$), for $d \geq 3$. If $\mathbf{u}_1 \in \mathbb{R}^{n_1},...,\mathbf{u}_d \in \mathbb{R}^{n_d}$, then one can form their tensor  product, which is the $d$-tensor $\mathcal{T} = \mathbf{u}_1 \otimes ... \otimes \mathbf{u}_d \in \mathbb{R}^{n_1 \times ... \times n_d}$ with entries
$$\mathcal{T}_{i_1...i_d} = (\mathbf{u}_1)_{i_1}...(\mathbf{u}_d)_{i_d}, \hspace{1cm} i_j \in [n_j], \; j \in [d]$$
Such tensors are called \textbf{rank-$1$} tensors. Every tensor is a finite linear combination of rank-$1$ tensors, and the smallest number of such terms in the combination is called the \textbf{rank} of the tensor.

\subsection{Tensor Networks}
There is an operation generalizing matrix multiplication to tensors: If $\mathcal{T} \in \mathbb{R}^{n_1 \times ... \times n_{i-1} \times n_i \times n_{i+1} ... \times n_d}$ and $\mathcal{S} \in \mathbb{R}^{m_1 \times ... \times m_{j-1} \times m_j \times m_{j+1} ... \times m_\ell}$ such that $n_i = m_j$, then their \textbf{contraction} along the $i^{\text{th}}$ and $j^{\text{th}}$ \textbf{modes} is the tensor $\mathcal{R} \in \mathbb{R}^{n_1 \times ... \times n_{i-1} \times n_{i+1} \times ... \times n_d \times m_1 \times ... \times m_{j-1} \times m_{j+1} \times ... \times m_\ell}$ with entries
$$\mathcal{R}_{t_1...t_{i-1}t_{i+1}...t_ds_1...s_{j-1}s_{j+1}...s_\ell} = \sum_{k = 1}^{n_i} \mathcal{T}_{t_1...t_{i-1}kt_{i+1}...t_d}\mathcal{S}_{s_1...s_{j-1}ks_{j+1}...s_\ell},$$
$$t_p \in [n_p], \; p \in [d], \; s_q \in [m_q], \; q \in [\ell]$$
Clearly, tensor contraction can be unwieldy. \textbf{Tensor network diagrams} are a graphical representation of tensors used to simplify contractions. A $d$-tensor is represented by a vertex with $d$ protruding edges. For example, two $3$-tensors $\mathcal{T} = \sum_{i = 1}^{r_\mathcal{T}} \mathbf{a}_i \otimes \mathbf{b}_i \otimes \mathbf{c}_i$ and $\mathcal{S} = \sum_{j = 1}^{r_\mathcal{S}} \mathbf{d}_j \otimes \mathbf{e}_j \otimes \mathbf{f}_j$ where $\mathbf{a}_i \in \mathbb{R}^{n_a}, \mathbf{b}_i \in \mathbb{R}^{n_b}, \mathbf{c}_i \in \mathbb{R}^{n_c}, \mathbf{d}_j \in \mathbb{R}^{n_d}, \mathbf{e}_j \in \mathbb{R}^{n_e}, \mathbf{f}_j \in \mathbb{R}^{n_f}$ are represented in Figures~\ref{len-2-train}a and \ref{len-2-train}b, where the dimension of each vector component labels an edge. If it is clear from the context, these edge labels can be dropped. Note that regardless of the number of tensor product terms in the sum defining $\mathcal{T}$ and $\mathcal{S}$, the same diagram of a vertex with three edges is used to represent them; what the diagram illustrates is the dimensionality of $\mathcal{T}$ and $\mathcal{S}$, as they are formed by tensor products of three vectors. If $n_c = n_f$, then $\mathcal{T}$ and $\mathcal{S}$ can be contracted along the $c$- and $f$-modes to form the tensor $\sum_{i = 1}^{r_\mathcal{T}}\sum_{j = 1}^{r_\mathcal{S}} \mathbf{a}_i \otimes \mathbf{b}_i \otimes \mathbf{d}_j \otimes \mathbf{e}_j \langle \mathbf{c}_i, \mathbf{f}_j \rangle$, shown in Figure~\ref{len-2-train}c.
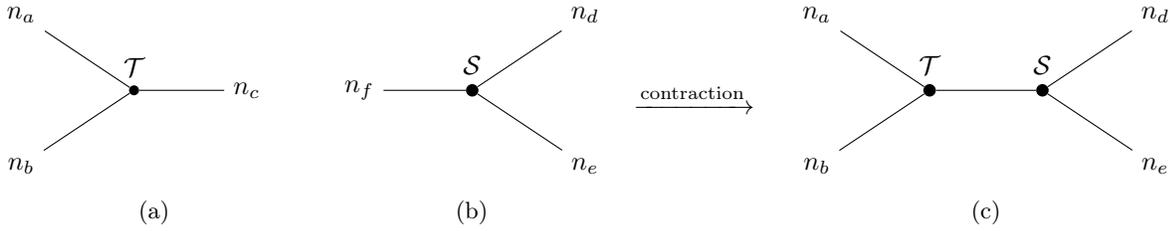
\begin{figure}[t]
    \begin{subfigure}{0.25\textwidth}
\begin{tikzpicture}[scale=1]
  \node (a1) at (-1.5,1) {$n_a$};  
  \node (a2) at (-1.5,-1)  {$n_b$};
  \node [style={circle,fill=black,scale=0.4,label={$\mathcal{T}$}}] (a3) at (0,0)  {};  
  \node (a4) at (1.5,0) {$n_c$};
  
  \draw (a1) -- (a3);
  \draw (a2) -- (a3);  
  \draw (a3) -- (a4);
\end{tikzpicture}
    \caption{}
    \end{subfigure}
    \begin{subfigure}{0.25\textwidth}
        \centering
\begin{tikzpicture}[scale=1]

  \node (a3) at (0,0)  {$n_f$};  
  \node [style={circle,fill=black,scale=0.5,label={$\mathcal{S}$}}] (a4) at (1.5,0) {};  
  \node (a5) at (3,1)  {$n_d$};  
  \node (a7) at (3,-1)  {$n_e$};

  \draw (a3) -- (a4);  
  \draw (a4) -- (a5);  
  \draw (a4) -- (a7);  
\end{tikzpicture}
    \caption{}
    \end{subfigure}
    \begin{subfigure}{0.0275\textwidth}
    \begin{equation*}
        \xrightarrow{\text{contraction}}
    \end{equation*}
    \begin{equation*}
        \small{}
    \end{equation*}
    \end{subfigure}
    \begin{subfigure}{0.5\textwidth}
        \centering
\begin{tikzpicture}[scale=1]
  \node (a1) at (-1.5,1) {$n_a$};  
  \node (a2) at (-1.5,-1)  {$n_b$};
  \node [style={circle,fill=black,scale=0.5,label={$\mathcal{T}$}}] (a3) at (0,0)  {};  
  \node [style={circle,fill=black,scale=0.5,label={$\mathcal{S}$}}] (a4) at (1.5,0) {};  
  \node (a5) at (3,1)  {$n_d$};  
  \node (a7) at (3,-1)  {$n_e$};

  \draw (a1) -- (a3);
  \draw (a2) -- (a3);  
  \draw (a3) -- (a4);  
  \draw (a4) -- (a5);  
  \draw (a4) -- (a7);  
\end{tikzpicture}
    \caption{}
    \end{subfigure}
    \caption{Carriages $\mathcal{T}$ and $\mathcal{S}$ contract to form a length $2$ tensor train}
    \label{len-2-train}
\end{figure}

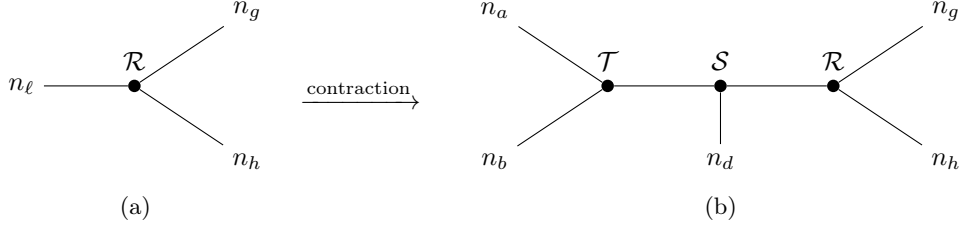
\begin{figure}[t]
\centering
    \begin{subfigure}{0.25\textwidth}
        \centering
\begin{tikzpicture}[scale=1]

  \node (a3) at (0,0)  {$n_\ell$};  
  \node [style={circle,fill=black,scale=0.5,label={$\mathcal{R}$}}] (a4) at (1.5,0) {};  
  \node (a5) at (3,1)  {$n_g$};  
  \node (a7) at (3,-1)  {$n_h$};

  \draw (a3) -- (a4);  
  \draw (a4) -- (a5);  
  \draw (a4) -- (a7);  
\end{tikzpicture}
    \caption{}
    \end{subfigure}
    \begin{subfigure}{0.08\textwidth}
    \begin{equation*}
        \xrightarrow{\text{contraction}}
    \end{equation*}
    \begin{equation*}
        \small{}
    \end{equation*}
    \end{subfigure}
    \begin{subfigure}{0.5\textwidth}
    \centering
\begin{tikzpicture}[scale=1]
  \node (a1) at (-1.5,1) {$n_a$};  
  \node (a2) at (-1.5,-1)  {$n_b$};
  \node [style={circle,fill=black,scale=0.5,label={$\mathcal{T}$}}] (a3) at (0,0)  {};  
  \node [style={circle,fill=black,scale=0.5,label={$\mathcal{S}$}}] (a4) at (1.5,0) {};  
  \node (a5) at (1.5,-1)  {$n_d$};  
  \node [style={circle,fill=black,scale=0.5,label={$\mathcal{R}$}}] (a6) at (3,0)  {};  
  \node (a7) at (4.5,1)  {$n_g$};
  \node (a8) at (4.5,-1)  {$n_h$};
  
  \draw (a1) -- (a3);
  \draw (a2) -- (a3);  
  \draw (a3) -- (a4);  
  \draw (a4) -- (a5);  
  \draw (a4) -- (a6);  
  \draw (a6) -- (a7);  
  \draw (a6) -- (a8);  
\end{tikzpicture}
\caption{}
\end{subfigure}
    \caption{Carriage $\mathcal{R}$ and length $2$ tensor train \ref{len-2-train}c contract to form a length $3$ tensor train}
    \label{len-3-train}
\end{figure}

If $\mathcal{R} = \sum_{k = 1}^{r_\mathcal{R}} \mathbf{g}_k \otimes \mathbf{h}_k \otimes \boldsymbol\ell_k$ where $\mathbf{g}_k \in \mathbb{R}^{n_g}, \mathbf{h}_k \in \mathbb{R}^{n_h}, \boldsymbol\ell_k \in \mathbb{R}^{n_\ell}$, represented by \ref{len-3-train}a in \textbf{Figure \ref{len-3-train}}, and $n_e = n_{\ell}$, then we can contract $\mathcal{R}$ with the tensor \ref{len-2-train}c along the $e$- and $\ell$-modes to form the tensor $\sum_{i = 1}^{r_\mathcal{T}}\sum_{j = 1}^{r_\mathcal{S}}\sum_{k = 1}^{r_\mathcal{R}} \mathbf{a}_i \otimes \mathbf{b}_i \otimes \mathbf{d}_j \otimes \mathbf{g}_k \otimes \mathbf{h}_k \langle \mathbf{c}_i, \mathbf{f}_j \rangle\langle \mathbf{e}_j, \boldsymbol\ell_k \rangle$, represented by \ref{len-3-train}b.

The tensor networks in Figures~\ref{len-2-train}c and \ref{len-3-train}b are called \textbf{tensor trains} of length $2$ and $3$, respectively, and generalize to longer trains. Tensor trains are \textbf{generated} by the sequential contraction of $3$-tensors, like $\mathcal{T}$, $\mathcal{S}$, and $\mathcal{R}$. These $3$-tensors at the vertices of the network will be referred to as \textbf{carriages}. {{Tensor trains can produce tensors of high rank:

\begin{lemma}\label{lem:rank} The rank of a length-$L$ tensor train with tensors $T_j = \sum_{i_j=1}^n \lambda^{(j)}_{i_j} (a^{(j)}_{i_j})^{\otimes 3}$ ($j=1,\ldots, L$) at its vertices, each of rank at most $n$, is at most $n^{L} =  n^{d-2}$, where $d$ is the order of the tensor.
\end{lemma}
\begin{proof}
Indeed, such a decomposition yields the following  tensor
$$\sum_{i_1,\ldots, i_L=1}^n\left[\lambda^{(1)}_{i_1}\cdots\lambda^{(L)}_{i_L} \langle a^{(1)}_{i_1}, a^{(2)}_{i_2}\rangle\cdots \langle a^{(L-1)}_{i_{L-1}}, a^{(L)}_{i_L}\rangle\right] (a^{(1)}_{i_1})^{\otimes 2}\otimes a^{(2)}_{i_2}\otimes\cdots\otimes a^{(L-1)}_{i_{L-1}}\otimes (a^{(L)}_{i_L})^{\otimes 2},$$
which  has rank at most $n^L$ and order $d$.
\end{proof}
}

Matrix product states often appear in the condensed matter physics literature with one additional vertex and edge attached to each end of the diagram, as opposed to our diagrams as in Figure \ref{len-2-train} \cite{quantum,TNNutshell}. This format can be converted to the format introduced here simply with a contraction of the single edges, thus removing the vertices at each end. Conversely, given a tensor network in the form of our diagram, a multiplication by the identity matrix at each end will yield the matrix product state form. Thus, as the two network formats are  equivalent, we proceed with the use of the term ``tensor train'' to describe the networks we study in this paper. The reason for the modified form is to better generalize and extend the class of tensors known as orthogonally decomposable tensors, as studied in \cite{kolda2015symmetric,Anandkumar2014,BDHR,Robeva,RobSei}.

%\sout{It is important to note that our concept of tensor train differs from that of Oseledets \cite{oseledets}, which is used for efficient compression and storage of high-dimensional data. As we will see,}

}

\subsection{Orthogonally Decomposable Tensors and Tensor Trains}
A carriage such as $\mathcal{T}=\sum_{i=1}^{r_{\mathcal T}}\mathbf{a}_i\otimes\mathbf{b}_i\otimes\mathbf{c}_i$ is said to be \textbf{orthogonally decomposable} or \textbf{odeco} if $\{\mathbf{a}_i\}_{i = 1}^{r_{\mathcal{T}}} \subset \mathbb{R}^{n_a}$, $\{\mathbf{b}_i\}_{i = 1}^{r_{\mathcal{T}}} \subset \mathbb{R}^{n_b}$, $\{\mathbf{c}_i\}_{i = 1}^{r_{\mathcal{T}}} \subset \mathbb{R}^{n_c}$ are each a set of mutually orthogonal vectors. The carriage $\mathcal{T}$ is said to be \textbf{symmetric} if $\mathbf{a}_i = \mathbf{b}_i = \mathbf{c}_i$. In this case, $\mathcal{T} = \sum_{i = 1}^{r_\mathcal{T}} \mathbf{a}_i \otimes \mathbf{a}_i \otimes \mathbf{a}_i = \sum_{i = 1}^{r_\mathcal{T}} \mathbf{a}_i^{\otimes 3}$. Orthogonally decomposable tensors are appealing because they can be decomposed efficiently via the slice method~\cite{kolda2015symmetric} or via the tensor power method~\cite{Anandkumar2014}. For more on orthogonally decomposable tensors, please refer to~\cite{Anandkumar2014,BDHR,Robeva,RobSei}. A tensor train is said to be \textbf{symmetric/orthogonal} if it is generated by carriages that are symmetric/orthogonally decomposable. The goal of this paper is to decompose such tensors.

Notice that we can rewrite a carriage $\mathcal{T} = \sum_{i = 1}^{r_\mathcal{T}} \mathbf{a}_i \otimes \mathbf{b}_i \otimes \mathbf{c}_i = \sum_{i = 1}^{r_\mathcal{T}} \lambda_i\hat{\mathbf{a}}_i \otimes \hat{\mathbf{b}}_i \otimes \hat{\mathbf{c}}_i$, for some $\lambda_i \in \mathbb{R}$ and $\hat{\mathbf{a}}_i \subset \mathbb{R}^{n_a}$, $\hat{\mathbf{b}}_i \subset \mathbb{R}^{n_b}$, $\hat{\mathbf{c}}_i \subset \mathbb{R}^{n_c}$. For example, we could choose $\lambda_i = \|\mathbf{a}_i\|\|\mathbf{b}_i\|\|\mathbf{c}_i\|$ and $\hat{\mathbf{a}}_i = \frac{\mathbf{a}_i}{\|\mathbf{a}_i\|}$, $\hat{\mathbf{b}}_i = \frac{\mathbf{b}_i}{\|\mathbf{b}_i\|}$, $\hat{\mathbf{c}}_i = \frac{\mathbf{c}_i}{\|\mathbf{c}_i\|}$, in which case $\mathcal{T}$ is orthogonal if $\{\hat{\mathbf{a}}_i\}_{i = 1}^{r_{\mathcal{T}}} \subset \mathbb{R}^{n_a}$, $\{\hat{\mathbf{b}}_i\}_{i = 1}^{r_{\mathcal{T}}} \subset \mathbb{R}^{n_b}$, $\{\hat{\mathbf{c}}_i\}_{i = 1}^{r_{\mathcal{T}}} \subset \mathbb{R}^{n_c}$ are each an orthonormal set. $r_{\mathcal{T}}$ is said to be the \textbf{rank} of $\mathcal T$, $\hat{\mathbf{a}}_i, \hat{\mathbf{b}}_i, \hat{\mathbf{c}}_i$ the \textbf{vectors}, and $\lambda_i$ the \textbf{coefficients} of $\mathcal{T}$. To decompose a tensor train, we therefore mean to find the ranks, vectors, and coefficients of each carriage which generate it. Our goal is to decompose tensors which are known to have an orthogonal tensor train form; we are not aware of a procedure for testing whether a given tensor has an orthogonal tensor train decomposition. The methods we present in Sections \ref{sec:3}, \ref{sec:4}, and \ref{sec:5} for decomposing tensor trains can be used reliably when there is no noise in the tensor. In Section \ref{sec:7}, we perform numerical experiments to test our methods both in the absence and presence of noise. One of the main ambitions of tensor network research is to find efficient decompositions. Tensor network diagrams can have line segments, which correspond to tensor trains and is what we study in this paper, as well as loops, which correspond to \textbf{tensor rings} and have been studied in \cite{Jianfeng}. For more on tensor networks, please refer to~\cite{quantum,TNNutshell}.

Lastly, there are objects which we will encounter, such as scalars and orthonormal sets of vectors, which lie in a Zariski-closed subset of $\mathbb{R}^n$, for some $n$ pertaining to each object. We will say that such an object is \textbf{generic} if it lies in some Zariski-open subset of this closed set. In other words, if an object is randomly drawn from this closed set, then with probability 1, it will be drawn from the open subset. We also abbreviate ``up to permutation and sign'' as \textbf{UTPS}.

\section{Symmetric Orthogonal Decomposition of Tensor Trains of Length~2}\label{sec:3}

In this section we investigate the problem of decomposing an order-4 tensor $\mathcal T$ according to a tensor train network (cf. Figure~\ref{fig:TTLength2}) such that the carriages $\mathcal A$ and $\mathcal B$ are symmetric. We begin in Section~\ref{sec:3.1} by also assuming that $\mathcal A$ and $\mathcal B$ are symmetric orthogonally decomposable, and we show in Section~\ref{sec:3.2} how to use whitening in the non-orthogonal case.
\subsection{Symmetric Orthogonal Decomposition of Length 2}\label{sec:3.1}

\begin{problem}{\label{sym-len-2-prob}}
Let $\mathcal{T} \in \mathbb{R}^{n \times n \times n \times n}$ be a $4$-tensor admitting the following decomposition
\begin{align}\label{eq:TTlength2}
\mathcal{T} = \sum_{i=1}^{r_\mathcal{A}} \sum_{j=1}^{r_\mathcal{B}} \lambda_i \mu_j \mathbf{u}_i^{\otimes 2} \otimes \mathbf{v}_j^{\otimes 2} \langle \mathbf{u}_i, \mathbf{v}_j \rangle
\end{align}
where $\{\mathbf{u}_i\}_{i = 1}^{r_\mathcal{A}},\{\mathbf{v}_j\}_{j = 1}^{r_\mathcal{B}} \subset \mathbb{R}^n$ are generic orthonormal sets, and $\lambda_i,\mu_j \in \mathbb{R}\setminus\{0\}$. In other words, assume that $\mathcal{T}$ is a tensor train generated by
$$\mathcal{A} = \sum_{i=1}^{r_\mathcal{A}} \lambda_i \mathbf{u}_i^{\otimes 3} \hspace{1cm}\text{and}\hspace{1cm} \mathcal{B} = \sum_{j=1}^{r_\mathcal{B}} \mu_j \mathbf{v}_j^{\otimes 3}.$$

\begin{figure}
\begin{center}
\begin{tikzpicture}

  \node (a1) at (-1.5,1) {};  
  \node (a2) at (-1.5,-1)  {};
  \node [style={circle,fill=black,scale=0.5,label={$\mathcal{A}$}}] (a3) at (0,0)  {};  
  \node [style={circle,fill=white,scale=0.01}] (a4) at (1.5,0) {};  
  \node [style={circle,fill=black,scale=0.5,label={$\mathcal{B}$}}] (a5) at (3,0)  {};  
  \node (a6) at (4.5,1)  {};
  \node (a7) at (4.5,-1)  {};
  
  \draw (a1) -- (a3);
  \draw (a2) -- (a3);  
  \draw (a3) -- (a4);  
  \draw (a4) -- (a5);  
  \draw (a5) -- (a6);  
  \draw (a5) -- (a7);  

%\node [below=1.25cm,align=flush center,text width=10cm] at (a4)
%        {
%        };
\end{tikzpicture}
\end{center}
\caption{A tensor network diagram of $\mathcal{T}$}
\label{fig:TTLength2}
\end{figure}
Given $\mathcal T$ as in~\eqref{eq:TTlength2}, find the above decomposition, including all ranks, vectors, and coefficients.
\end{problem}

% \begin{mdframed}

% \end{mdframed}

\medskip

We solve this problem by adapting Kolda's slice method \cite{kolda2015symmetric}. Consider generic weighted sums over the last two indices and first two indices of $\mathcal{T}$
$$\mathbf{S}_\mathcal{A} = \sum_{i_3 = 1}^n \sum_{i_4 = 1}^n \alpha_{i_3 i_4} \mathcal{T}(:, :, i_3, i_4) = \sum_{i=1}^{r_\mathcal{A}} \sigma_i \mathbf{u}_i \mathbf{u}_i^\top = \mathbf{U}\boldsymbol\Sigma\mathbf{U}^\top,$$
$$\mathbf{S}_\mathcal{B} = \sum_{i_1 = 1}^n \sum_{i_2 = 1}^n \beta_{i_1 i_2} \mathcal{T}(i_1, i_2, :, :) = \sum_{j=1}^{r_\mathcal{B}} \gamma_j \mathbf{v}_j \mathbf{v}_j^\top = \mathbf{V}\boldsymbol\Gamma\mathbf{V}^\top,$$
for some generic $\alpha_{i_3 i_4},\beta_{i_1 i_2},\sigma_i,\gamma_j \in \mathbb{R}$. Here $\boldsymbol\Sigma,\boldsymbol\Gamma \in \mathbb{R}^{n \times n}$ are diagonal with $\sigma_i$ or $0$ and $\gamma_j$ or $0$ along their diagonals, respectively, and  $\mathbf{U},\mathbf{V} \in \mathbb{R}^{n \times n}$ are orthogonal matrices with $\mathbf{u}_i$ and $\mathbf{v}_j$ in $r_\mathcal{A}$ and $r_\mathcal{B}$ of their columns, respectively. The vectors $\mathbf{u}_i$ and $\mathbf{v}_j$ can, therefore, be found UTPS via an eigendecomposition on $\mathbf{S}_\mathcal{A}$ and $\mathbf{S}_\mathcal{B}$, since they correspond to the nonzero eigenvectors. Thus, we have found all ranks and vectors. To find the coefficients, construct the matrix $\mathbf{R} \in \mathbb{R}^{r_{\mathcal{A}}\times r_{\mathcal{B}}}$ with entries
$$\mathbf{R}_{\hat{i}\hat{j}} = \frac{\mathcal{T}(\mathbf{u}_{\hat{i}},\mathbf{v}_{\hat{j}})}{\langle \mathbf{u}_{\hat{i}}, \mathbf{v}_{\hat{j}} \rangle} = \lambda_{\hat{i}}\mu_{\hat{j}}.$$
Note that $\langle \mathbf{u}_{\hat{i}}, \mathbf{v}_{\hat{j}} \rangle \neq 0$ since $\mathbf{u}_i$ and $\mathbf{v}_j$ are generic. Then, $\mathbf{R} = \boldsymbol\lambda\boldsymbol\mu^\top$ is a rank-$1$ matrix, where $\boldsymbol\lambda$ and $\boldsymbol\mu$ are the vectors whose components are $\lambda_i$ and $\mu_j$. Performing Singular Value Decomposition (SVD) on $\mathbf{R}$, if $\boldsymbol\zeta$ and $\boldsymbol\eta$ are the first left and right singular vectors and $\tau$ is the largest non-zero singular value, then $\boldsymbol\lambda = \tau\boldsymbol\zeta$ and $\boldsymbol\mu = \boldsymbol\eta$ is a solution. This non-uniqueness is due to non-zero scaling.

Pseudocode for this procedure can be found in Algorithm~\ref{symmL2_alg}.

\subsection{Whitening: Symmetric Non-Orthogonal Decomposition of Length  2}\label{sec:3.2}

If $\mathcal{T}$ has a decomposition as in~\eqref{eq:TTlength2} where $\{\mathbf{u}_i\}_{i = 1}^{r_{\mathcal{L}}}$ and $\{\mathbf{v}_j\}_{j = 1}^{r_{\mathcal{R}}}$ are linearly independent but not orthonormal sets, then we can adapt Kolda's method of whitening \cite{kolda2015symmetric}. The generic weighted sums over indices of $\mathcal{T}$ have two different decompositions: one in terms of the vectors $\mathbf{u}_i$ and $\mathbf{v}_j$, and one in terms of its ``skinny'' eigendecomposition:
$$\mathbf{C}_\mathcal{A} = \sum_{i_3 = 1}^n \sum_{i_4 = 1}^n \rho_{i_3 i_4} \mathcal{T}(:, :, i_3, i_4) = \sum_{i = 1}^{r_\mathcal{A}} \widetilde{\sigma}_i \mathbf{u}_i \mathbf{u}_i^\top = \widetilde{\mathbf{U}} \widetilde{\boldsymbol{\Sigma}} \widetilde{\mathbf{U}}^\top = \mathbf{X}_{\mathcal{A}}\mathbf{D}_{\mathcal{A}}\mathbf{X}_{\mathcal{A}}^\top$$
$$\mathbf{C}_\mathcal{B} = \sum_{i_1 = 1}^n \sum_{i_2 = 1}^n \tau_{i_1 i_2} \mathcal{T}(i_1, i_2, :, :) = \sum_{j = 1}^{r_\mathcal{B}} \widetilde{\gamma}_j \mathbf{v}_j \mathbf{v}_j^\top = \widetilde{\mathbf{V}} \widetilde{\boldsymbol{\Gamma}} \widetilde{\mathbf{V}}^\top = \mathbf{X}_{\mathcal{B}}\mathbf{D}_{\mathcal{B}}\mathbf{X}_{\mathcal{B}}^\top$$
where $\widetilde{\boldsymbol\Sigma}, \mathbf{D}_{\mathcal{A}} \in \mathbb{R}^{r_{\mathcal{A}} \times r_{\mathcal{A}}}, \widetilde{\boldsymbol\Gamma}, \mathbf{D}_{\mathcal{B}} \in \mathbb{R}^{r_{\mathcal{B}} \times r_{\mathcal{B}}}$ are diagonal with non-zero diagonal entries, $\widetilde{\mathbf{U}} \in \mathbb{R}^{n \times r_\mathcal{A}}$ and $\widetilde{\mathbf{V}} \in \mathbb{R}^{n \times r_\mathcal{B}}$ have $\mathbf{u}_i$ and $\mathbf{v}_j$ as columns, and $\mathbf{X}_{\mathcal{A}} \in \mathbb{R}^{n \times r_{\mathcal{A}}}$ and $\mathbf{X}_{\mathcal{B}} \in \mathbb{R}^{n \times r_{\mathcal{B}}}$ are orthogonal matrices. If $\mathbf{C}_\mathcal{A}$ and $\mathbf{C}_\mathcal{B}$ are positive semi-definite (PSD), then $\widetilde{\boldsymbol{\Sigma}}$, $\mathbf{D}_{\mathcal{A}}$, $\widetilde{\boldsymbol{\Gamma}}$, and $\mathbf{D}_{\mathcal{B}}$ have positive diagonal entries. Hence, let
% $\mathbf{W}_\mathcal{A}\in \mathbb{R}^{r_\mathcal{A} \times n}$ and  $\mathbf{W}_\mathcal{B}\in \mathbb{R}^{r_\mathcal{B} \times n}$ be  such that
% $$\mathbf{W}_\mathcal{A}^ T \mathbf{W}_\mathcal{A} = \mathbf{C}_\mathcal{A} \quad \text{ and } \quad \mathbf{W}_\mathcal{B}^ T \mathbf{W}_\mathcal{B} = \mathbf{C}_\mathcal{B}. $$ 
$$\mathbf{W}_\mathcal{A} = \mathbf{D}_{\mathcal{A}}^{-\frac{1}{2}} \mathbf{X}_{\mathcal{A}}^\top \in \mathbb{R}^{r_\mathcal{A} \times n} \hspace{2cm} \mathbf{W}_\mathcal{B} = \mathbf{D}_{\mathcal{B}}^{-\frac{1}{2}} \mathbf{X}_{\mathcal{B}}^\top \in \mathbb{R}^{r_\mathcal{B} \times n}$$
Then we see, for instance,
$$\left(\mathbf{W}_\mathcal{A} \widetilde{\mathbf{U}} \widetilde{\boldsymbol{\Sigma}}^{-\frac{1}{2}}\right)\left(\mathbf{W}_\mathcal{A} \widetilde{\mathbf{U}} \widetilde{\boldsymbol{\Sigma}}^{-\frac{1}{2}}\right)^\top = \mathbf{W}_\mathcal{A} \left(\widetilde{\mathbf{U}} \widetilde{\boldsymbol{\Sigma}} \widetilde{\mathbf{U}}^\top\right) \mathbf{W}_\mathcal{A}^\top = \mathbf{D}_{\mathcal{A}}^{-\frac{1}{2}}\mathbf{X}_{\mathcal{A}}^\top\left(\mathbf{X}_{\mathcal{A}}\mathbf{D}_{\mathcal{A}}\mathbf{X}_{\mathcal{A}}^\top\right)\mathbf{X}_{\mathcal{A}}\mathbf{D}_{\mathcal{A}}^{-\frac{1}{2}} = \mathbf{I}_{r_\mathcal{A} \times r_\mathcal{A}}$$
which shows that $\mathbf{W}_\mathcal{A} \widetilde{\mathbf{U}} \widetilde{\boldsymbol{\Sigma}}^{-\frac{1}{2}} \in \mathbb{R}^{r_\mathcal{A} \times r_\mathcal{A}}$ is orthogonal. Thus, $\overline{\mathbf{U}} = \mathbf{W}_\mathcal{A} \widetilde{\mathbf{U}} \in \mathbb{R}^{r_\mathcal{A} \times r_\mathcal{A}}$ has orthogonal columns, and similarly for $\overline{\mathbf{V}} = \mathbf{W}_\mathcal{B} \widetilde{\mathbf{V}} \in \mathbb{R}^{r_\mathcal{B} \times r_\mathcal{B}}$. This means that the tensor train obtained by contracting $\mathcal T$ with the matrix $\mathbf{W}_{\mathcal A}$ along its left two dangling edges and with the matrix $\mathbf{W}_{\mathcal B}$ along its right two dangling edges:
$$\overline{\mathcal{T}} = \mathcal{T}(\mathbf{W}_\mathcal{A}, \mathbf{W}_\mathcal{A}, \mathbf{W}_\mathcal{B}, \mathbf{W}_\mathcal{B}) = \sum_{i=1}^{r_\mathcal{A}} \sum_{j=1}^{r_\mathcal{B}} \lambda_i \mu_j (\mathbf{W}_\mathcal{A} \mathbf{u}_i)^{\otimes 2} \otimes (\mathbf{W}_\mathcal{B} \mathbf{v}_j)^{\otimes 2} \langle \mathbf{u}_i, \mathbf{v}_j \rangle$$
$$= \sum_{i=1}^{r_\mathcal{A}} \sum_{j=1}^{r_\mathcal{B}} \widetilde{\lambda}_i \widetilde{\mu}_j \left(\frac{\mathbf{W}_\mathcal{A} \mathbf{u}_i}{\|\mathbf{W}_\mathcal{A} \mathbf{u}_i\|}\right)^{\otimes 2} \otimes \left(\frac{\mathbf{W}_\mathcal{B} \mathbf{v}_j}{\|\mathbf{W}_\mathcal{B} \mathbf{v}_j\|}\right)^{\otimes 2} \langle \mathbf{u}_i, \mathbf{v}_j \rangle \in \mathbb{R}^{r_{\mathcal{A}} \times r_{\mathcal{A}} \times r_{\mathcal{B}} \times r_{\mathcal{B}}}$$
for appropriate $\widetilde{\lambda}_i,\widetilde{\mu}_j \in \mathbb{R}$, is symmetric and orthogonal and can be decomposed by the method described earlier. If $\overline{\mathbf{u}}_i$ are the vectors found from the eigendecomposition, then one can recover
$$\mathbf{u}_i = \frac{\mathbf{W}_\mathcal{A}^{\dagger} \overline{\mathbf{u}}_i}{\|\mathbf{W}_\mathcal{A}^{\dagger} \overline{\mathbf{u}}_i\|} = \frac{\mathbf{X}_{\mathcal{A}}\mathbf{D}_{\mathcal{A}}^{\frac{1}{2}} \overline{\mathbf{u}}_i}{\|\mathbf{X}_{\mathcal{A}}\mathbf{D}_{\mathcal{A}}^{\frac{1}{2}} \overline{\mathbf{u}}_i\|}$$
and similarly for $\mathbf{v}_j$. Algorithm~\ref{symmL2_alg} provides pseudocode for decomposing symmetric tensor trains of length $2$ in either the orthogonal or non-orthogonal case{{, and its correctness is summarized by the following result:

\begin{theorem}
If $\mathcal{T} \in \mathbb{R}^{n\times n \times n \times n}$ is a $4$-tensor generated by carriages $\displaystyle \mathcal{A} = \sum_{i = 1}^{r_{\mathcal{A}}} \lambda_{i}\mathbf{u}_{i}^{\otimes 3}$ and $\displaystyle \mathcal{B} = \sum_{j = 1}^{r_{\mathcal{B}}} \mu_{j}\mathbf{v}_{j}^{\otimes 3}$ where $\lambda_{i},\mu_{j} \in \mathbb{R}$ are generic coefficients and $\{\mathbf{u}_{i}\}_{i = 1}^{r_{\mathcal{A}}}, \{\mathbf{v}_{j}\}_{j = 1}^{r_{\mathcal{B}}} \subset \mathbb{R}^n$ are generic vectors, then Algorithm \ref{symmL2_alg} recovers $r_{\mathcal{A}}$ and $r_{\mathcal{B}}$, and $\lambda_i$, $\mu_j$, $\{\mathbf{u}_{i}\}_{i = 1}^{r_{\mathcal{A}}}$ and $\{\mathbf{v}_{j}\}_{j = 1}^{r_{\mathcal{B}}}$ UTPS.
\end{theorem}

}}

\begin{algorithm}
\caption{Symmetric Decomposition of Tensor Trains of Length 2}\label{symmL2_alg}

\textbf{Input:}

$\mathcal{T} \in \mathbb{R}^{n \times n \times n \times n}$ that decomposes according to a symmetric tensor train of length 2. \\An indicator if $\mathcal A$ and $\mathcal B$ are non-orthogonal, and therefore we need to apply whitening first.

\textbf{Output:}

$\boldsymbol{\lambda} \in \mathbb{R}^{r_{\mathcal{A}}}$, $\{\mathbf{u}_i\}_{i = 1}^{r_{\mathcal{A}}} \subset \mathbb{R}^n$, the coefficients and (orthonormal) vectors forming $\mathcal{A}$

$\boldsymbol{\mu} \in \mathbb{R}^{r_{\mathcal{B}}}$, , $\{\mathbf{v}_j\}_{j = 1}^{r_{\mathcal{B}}} \subset \mathbb{R}^n$, similarly for $\mathcal{B}$

\begin{algorithmic}[1]
\If {whitening}
\Repeat
    \State $\rho, \tau \gets $ generic real $n \times n$ matrices
    \State $\mathbf{C}_\mathcal{A} \gets \sum_{i_3 = 1}^n \sum_{i_4 = 1}^n \rho_{i_3 i_4} \mathcal{T}(:, :, i_3, i_4)$
    \State $\mathbf{C}_\mathcal{B} \gets \sum_{i_1 = 1}^n \sum_{i_2 = 1}^n \tau_{i_1 i_2} \mathcal{T}(i_1, i_2, :, :)$
\Until{$\mathbf{C}_\mathcal{A}, \mathbf{C}_\mathcal{B}$ are PSD or exit with failure}
\State $\mathbf{X}_\mathcal{A} \mathbf{D}_\mathcal{A} \mathbf{X}_\mathcal{A}^\top \gets $ ``skinny" eigendecomposition of $\mathbf{C}_\mathcal{A}$
\State $\mathbf{X}_\mathcal{B} \mathbf{D}_\mathcal{B} \mathbf{X}_\mathcal{B}^\top \gets $ ``skinny" eigendecomposition of $\mathbf{C}_\mathcal{B}$
\State $\mathbf{W}_\mathcal{A} \gets \mathbf{D}_\mathcal{A}^{-\frac{1}{2}} \mathbf{X}_\mathcal{A}^\top$
\State $\mathbf{W}_\mathcal{B} \gets \mathbf{D}_\mathcal{B}^{-\frac{1}{2}} \mathbf{X}_\mathcal{B}^\top$
\State $\overline{\mathcal{T}} \gets \mathcal{T}(\mathbf{W}_\mathcal{A}, \mathbf{W}_\mathcal{A}, \mathbf{W}_\mathcal{B}, \mathbf{W}_\mathcal{B})$
\Else
\State $\overline{\mathcal{T}} \gets \mathcal{T}$
\EndIf
\State $\alpha, \beta \gets $ generic real $n \times n$ matrices
\State $\mathbf{S}_\mathcal{A} \gets \sum_{i_3 = 1}^n \sum_{i_4 = 1}^n \alpha_{i_3 i_4} \overline{\mathcal{T}}(:, :, i_3, i_4)$
\State $\mathbf{S}_\mathcal{B} \gets \sum_{i_1 = 1}^n \sum_{i_2 = 1}^n \beta_{i_1 i_2} \overline{\mathcal{T}}(i_1, i_2, :, :)$
\State $\{\sigma_i, \mathbf{\overline{u}}_i\}_{i=1}^{r_\mathcal{A}} \gets $ eigenpairs of $\mathbf{S}_\mathcal{A}$ with $\sigma_i \neq 0$
\State $\{\gamma_j, \mathbf{\overline{v}}_j\}_{j=1}^{r_\mathcal{B}} \gets $ eigenpairs of $\mathbf{S}_\mathcal{B}$ with $\gamma_j \neq 0$
    \If {whitening}
    \State $\mathbf{u}_i \gets \mathbf{W}_\mathcal{A}^\dagger \overline{\mathbf{u}}_i/\|\mathbf{W}_\mathcal{A}^\dagger \overline{\mathbf{u}}_i\|, \; i \in [r_{\mathcal{A}}]$
    \Else
    \State $\mathbf{u}_i \gets \overline{\mathbf{u}}_i, \; i \in [r_{\mathcal{A}}]$
    \EndIf
    \If {whitening}
    \State $\mathbf{v}_j \gets \mathbf{W}_\mathcal{B}^\dagger \overline{\mathbf{v}}_j/\|\mathbf{W}_\mathcal{B}^\dagger \overline{\mathbf{v}}_j\|, \; j \in [r_{\mathcal{B}}]$
    \Else
    \State $\mathbf{v}_j \gets \overline{\mathbf{v}}_j, \; j \in [r_{\mathcal{B}}]$
    \EndIf
    \State $\boldsymbol{\mathbf{R}}_{ij} \gets \mathcal{T}(\overline{\mathbf{u}}_i,\overline{\mathbf{u}}_i,\overline{\mathbf{v}}_j,\overline{\mathbf{v}}_j) / \langle \mathbf{u}_i, \mathbf{v}_j \rangle, \; i \in [r_{\mathcal{A}}], \; j \in [r_{\mathcal{B}}]$
\State $\boldsymbol{\lambda} \gets $ largest singular value multiplied by corresponding left singular vector from SVD of $\mathbf{R}$
\State $\boldsymbol{\mu} \gets $ corresponding right singular vector from SVD of $\mathbf{R}$
\end{algorithmic}
\end{algorithm}

\subsection{Remarks}{\label{symm-odeco-2-remark}}
It is not guaranteed that $\mathbf{C}_\mathcal{A}$ and $\mathbf{C}_\mathcal{B}$ are PSD, and hence if they are not, we recompute generic weighted sums over indices of $\mathcal{T}$ until they are PSD or exit with failure. We analyze the performance of Algorithm~\ref{symmL2_alg} in Section~\ref{sec:7}. It is also straightforward to generalize to the case where each carriage in the train is a symmetric $m$-tensor, for some $m > 3$. Furthermore, if there are $p$ contracted edges between $\mathcal{A}$ and $\mathcal{B}$, then one simply divides by $\langle \mathbf{u}_i, \mathbf{v}_j \rangle ^p$ when forming the matrix $\mathbf{R}$ from Section~\ref{sec:3.1}.

\subsection{Applications}\label{sec:3.4}

The tensor decomposition discussed in Section~\ref{sec:3.2} is equivalent to the decomposition of the joint distribution of the four leaf variables in the graph in Figure~\ref{fig:GraphicalModel} below (note that the joint distribution of four variables each taking $n$ values is precisely an $n\times n\times n\times n$ tensor). This follows directly from the duality between tensor networks and graphical models in~\cite{RobSei2}. Note that the three hidden variables take $r$, $s$, and $n$ values respectively, and finding the decomposition discussed in Section~\ref{sec:3.2} recovers the joint distribution of {\em all} the variables in the graphical below.
\begin{figure}[H]
\begin{center}
\includegraphics[width=0.35\textwidth]{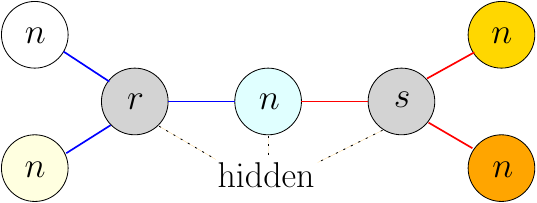}
\end{center}
\caption{A hidden variable graphical model equivalent (in the sense that they yield the same tensor decomposition) to a length-2 tensor train whose carriages are symmetric and have ranks $r$ and $s$, respectively.}\label{fig:GraphicalModel}
\end{figure}

\medskip

\section{Symmetric Orthogonal Decomposition of Tensor Trains of Length $L \geq 3$}\label{sec:4}

\medskip

In this section we consider longer tensor trains such that the carriages are symmetric and orthogonally decomposable.

\begin{center}
\begin{tikzpicture}{\label{len-3-train-fig}}

  \node (a1) at (-1.5,1) {};  
  \node (a2) at (-1.5,-1)  {};
  \node [style={circle,fill=black,scale=0.5,label={$\mathcal{X}_1$}}] (a3) at (0,0)  {};  
  \node [style={circle,fill=black,scale=0.5,label={$\mathcal{X}_2$}}] (a4) at (1.5,0) {};  
  \node (a5) at (1.5,-1)  {};  
  \node (a6) at (3,0)  {$\cdots$};  
  \node (a7) [style={circle,fill=black,scale=0.5,label={$\mathcal{X}_{L-1}$}}] at (4.5,0)  {};
  \node (a11) at (4.5,-1)  {};  
  \node [style={circle,fill=black,scale=0.5,label={$\mathcal{X}_L$}}] (a8) at (6,0)  {};
  \node (a9) at (7.5,1)  {};
  \node (a10) at (7.5,-1)  {};
  
  \draw (a1) -- (a3);
  \draw (a2) -- (a3);  
  \draw (a3) -- (a4);  
  \draw (a4) -- (a5);  
  \draw (a4) -- (a6);  
  \draw (a6) -- (a7);  
  \draw (a7) -- (a8);
  \draw (a8) -- (a9);
  \draw (a8) -- (a10);
  \draw (a7) -- (a11);
  
\node [below=1.25cm,align=flush center,text width=10cm] at (a6)
        {
        Figure 3: A tensor network diagram of $\mathcal{T}$
        };

\end{tikzpicture}
\end{center}

\begin{problem}
Let $L \geq 3$ and $\mathcal{T} \in \mathbb{R}^{n^{L+2}}$ be an $(L+2)$-tensor admitting the following decomposition
\begin{align}\label{eq:2}
\mathcal{T} = \sum_{i_1 = 1}^{r_{\mathcal{X}_1}}...\sum_{i_L = 1}^{r_{\mathcal{X}_L}} \lambda_{i_1}^1 ... \lambda_{i_L}^L (\mathbf{x}_{i_1}^1)^{\otimes 2} \otimes \mathbf{x}_{i_2}^2 \otimes ... \otimes \mathbf{x}_{i_{L-1}}^{L-1} \otimes (\mathbf{x}_{i_L}^L)^{\otimes 2} \langle \mathbf{x}_{i_1}^1, \mathbf{x}_{i_2}^2 \rangle ... \langle \mathbf{x}_{i_{L-1}}^{L-1}, \mathbf{x}_{i_L}^L \rangle
\end{align}
where for all $j \in [L]$, $\{\mathbf{x}_{i_j}^j\}_{i_j = 1}^{r_{\mathcal{X}_j}} \subset \mathbb{R}^n$ is a generic orthonormal set, and $\lambda_{i_j}^j \in \mathbb{R}$ are generic. In other words, assume that $\mathcal{T}$ (cf. Figure \ref{len-3-train-fig}) is a tensor train generated by
$$\mathcal{X}_j = \sum_{i_j = 1}^{r_{\mathcal{X}_j}} \lambda_{i_j}^j (\mathbf{x}_{i_j}^j)^{\otimes 3}$$

Given $\mathcal T$, find the decomposition~\eqref{eq:2}, including all ranks, vectors, and coefficients.
\end{problem}

The assumption that $\{\mathbf{x}_{i_j}^j\}_{i_j = 1}^{r_{\mathcal{X}_j}}$ and $\lambda_{i_j}^j$ are generic  will be important for our results on how to decompose $\mathcal{T}$, as the results may not hold otherwise. Because of the genericity assumption, our results hold with probability $1$. We present a solution when $\mathcal{T}$ satisfies the following condition:
{{

\begin{definition}{\label{drc}}
The tensor train $\mathcal{T}$ satisfies the \textbf{Decreasing Ranks Condition (DRC)} if there exists a $j \in [L]$ such that $r_{\mathcal{X}_1} \geq r_{\mathcal{X}_2} \geq ... \geq r_{\mathcal{X}_{j-1}} \geq r_{\mathcal{X}_j}$ and $r_{\mathcal{X}_j} \leq r_{\mathcal{X}_{j+1}} \leq ... \leq r_{\mathcal{X}_{L-1}} \leq r_{\mathcal{X}_L}$.
\end{definition}}

}

The DRC essentially states that there is not too much information lost about the ranks, vectors, and coefficients of the carriages upon contraction. The steps of the solution involve a method called \textbf{kernel completion}, which we describe in detail later, and are summarized below. We present pseudocode in Algorithm~\ref{symm-odeco-alg}.
\begin{enumerate}
    \item Find $\{\mathbf{x}_{i_1}^1\}_{i_1 = 1}^{r_{\mathcal{X}_1}}$ via an eigendecomposition of $\mathcal{T}(\cdot,\cdot,\mathbf{v},... ,\mathbf{v})$, where $\mathbf{v} \in \mathbb{R}^n$ is a generic vector. Then sequentially apply kernel completion starting from $\mathcal{X}_2$ and ending at $\mathcal{X}_{L-1}$, obtaining a collection of orthonormal vectors $\{\mathbf{x}_{i_2}^{2,\text{LR}}\}_{i_2 = 1}^{r_{\mathcal{X}_2,\text{LR}}}$, $...$, $\{\mathbf{x}_{i_{L-1}}^{L-1,\text{LR}}\}_{i_{L-1} = 1}^{r_{\mathcal{X}_{L-1},\text{LR}}}$.
    \item Find $\{\mathbf{x}_{i_L}^L\}_{i_L = 1}^{r_{\mathcal{X}_L}}$ via an eigendecomposition of $\mathcal{T}(\mathbf{v},...,\mathbf{v},\cdot,\cdot)$, where $\mathbf{v} \in \mathbb{R}^n$ is a generic vector. Then sequentially apply kernel completion starting from $\mathcal{X}_{L-1}$ and ending at $\mathcal{X}_2$, obtaining a collection of orthonormal vectors $\{\mathbf{x}_{i_{L-1}}^{L-1,\text{RL}}\}_{i_{L-1} = 1}^{r_{\mathcal{X}_{L-1},\text{RL}}}$, $...$, $\{\mathbf{x}_{i_2}^{2,\text{RL}}\}_{i_2 = 1}^{r_{\mathcal{X}_2,\text{RL}}}$.
    \item For each $2 \leq j \leq L-1$, choose an orthonormal set $\{\mathbf{x}_{i_j}^j\}_{i_j = 1}^{r_{\mathcal{X}_j}}$ from $\{\mathbf{x}_{i_j}^{j,\text{LR}}\}_{i_j = 1}^{r_{\mathcal{X}_j,\text{LR}}}$ and $\{\mathbf{x}_{i_j}^{j,\text{RL}}\}_{i_j = 1}^{r_{\mathcal{X}_j,\text{RL}}}$ based on whichever of $r_{\mathcal{X}_j,\text{LR}}$ or $r_{\mathcal{X}_j,\text{RL}}$ is greater. If they are equal, choose either set.
    \item Construct an $L$-tensor $\mathcal{R} \in \mathbb{R}^{r_{\mathcal{X}_1}\times...\times r_{\mathcal{X}_L}}$ whose $\hat{i}_1...\hat{i}_L$-entry is given by
    \begin{align}\label{eq:Rtensor}\mathcal{R}_{\hat{i}_1...\hat{i}_L} = \frac{\mathcal{T}(\mathbf{x}_{\hat{i}_1}^1,...,\mathbf{x}_{\hat{i}_L}^L)}{\prod_{k = 1}^{L-1}\langle \mathbf{x}_{\hat{i}_k}^k, \mathbf{x}_{\hat{i}_{k+1}}^{k+1} \rangle} = \lambda_{\hat{i}_1}^1...\lambda_{\hat{i}_L}^L
    \end{align}
    for all $\hat{i}_j \in [r_{\mathcal{X}_j}]$ and $j \in [L]$. Then $\mathcal{R} = \boldsymbol\lambda_1\otimes...\otimes\boldsymbol\lambda_L$, where $\boldsymbol\lambda_j \in \mathbb{R}^{r_{\mathcal{X}_j}}$ is the vector whose entries are $\lambda_{i_j}^j$. Apply the rank-$1$ alternating least squares~\cite{KoBa09} algorithm on $\mathcal{R}$ to obtain all the coefficients.
\end{enumerate}
We now explain each of these steps in more detail.

\textbf{Step 1: Decomposing the train from left to right}

We can always find the vectors at the ``ends'' of the train $\{\mathbf{x}_{i_1}^1\}$ and $\{\mathbf{x}_{i_L}^L\}$ UTPS regardless of whether or not $\mathcal{T}$ satisfies the DRC. Let $\mathbf{v} \in \mathbb{R}^n$ be a generic vector and consider $\overline{\mathbf{X}_1} = \mathcal{T}(\cdot,\cdot,\mathbf{v},...,\mathbf{v})$
$$\overline{\mathbf{X}_1} = \sum_{i_1 = 1}^{r_{\mathcal{X}_1}}\left(\sum_{i_2 = 1}^{r_{\mathcal{X}_2}}...\sum_{i_L = 1}^{r_{\mathcal{X}_L}} \lambda_{i_1}^1 ... \lambda_{i_L}^L\langle \mathbf{x}_{i_1}^1, \mathbf{x}_{i_2}^2 \rangle ... \langle \mathbf{x}_{i_{L-1}}^{L-1}, \mathbf{x}_{i_L}^L \rangle\langle \mathbf{x}_{i_2}^2, \mathbf{v} \rangle ... \langle \mathbf{x}_{i_{L-1}}^{L-1}, \mathbf{v} \rangle\langle \mathbf{x}_{i_L}^L, \mathbf{v} \rangle^2\right) (\mathbf{x}_{i_1}^1)^{\otimes 2}$$
$$= \sum_{i_1 = 1}^{r_{\mathcal{X}_1}} \sigma_{i_1} \mathbf{x}_{i_1}^1(\mathbf{x}_{i_1}^1)^\top = \widetilde{\mathbf{X}_1}\boldsymbol{\Sigma} \widetilde{\mathbf{X}_1}^\top$$
for some $\sigma_i \in \mathbb{R}$, diagonal $\boldsymbol{\Sigma} \in \mathbb{R}^{n \times n}$ with $\sigma_i$ or $0$ along the diagonal, and orthogonal $\widetilde{\mathbf{X}_1} \in \mathbb{R}^{n \times n}$ with $\{\mathbf{x}_{i_1}^1\}$ in $r_{\mathcal{X}_1}$ of its columns. The set of vectors $\{\mathbf{x}_{i_1}^1\}$ can therefore be found UTPS via an eigendecomposition of $\overline{\mathbf{X}}_1$, as they correspond to the non-zero eigenvalues. The same method can be used to find $\widetilde{\mathbf{X}_L} \in \mathbb{R}^{n \times n}$, which has $\{\mathbf{x}_{i_L}^L\}$ in $r_{\mathcal{X}_L}$ of its columns, UTPS. Note that $\{\mathbf{x}_{i_1}^1\}$ and $\{\mathbf{x}_{i_L}^L\}$ can also be found using Kolda's method as with trains of length 2, which forms an alternative $\overline{\mathbf{X}}_1$ by constructing generic weighted sums over all but two indices of $\mathcal{T}$
$$\overline{\mathbf{X}_1} = \sum_{i_3 = 1}^n ... \sum_{i_{L+2} = 1}^n \gamma_{i_3,...,i_{L+2}}\mathcal{T}(:,:,i_3,...,i_{L+2}),$$
where $\gamma_{i_3,...,i_{L+2}} \in \mathbb{R}$ are generic. We found our method to be efficient and simple to implement for long trains, hence we use it.

{\textbf{Kernel completion}}

Complete the set of orthonormal vectors $\{\mathbf{x}_{i_1}^1\}_{i_1 = 1}^{r_{\mathcal{X}_1}}$ found above to an orthonormal basis and let $\mathbf{X}_1 \in \mathbb{R}^{n \times n}$ be the orthogonal matrix whose columns are these basis vectors. Denote by $\mathbf{X}_2 \in \mathbb{R}^{n \times n}$ an orthogonal matrix whose first $r_{\mathcal{X}_2}$ columns are $\{\mathbf{x}_{i_2}^2\}$, let $\mathbf{v} \in \mathbb{R}^n$ be a generic vector, and consider $\mathbf{T} = \mathcal{T}(\mathbf{v},\cdot,\cdot,\mathbf{v},...,\mathbf{v})$
$$\mathbf{T} = \sum_{i_1 = 1}^{r_{\mathcal{X}_1}}...\sum_{i_L = 1}^{r_{\mathcal{X}_L}} \lambda_{i_1}^1 ... \lambda_{i_L}^L \mathbf{x}_{i_1}^1(\mathbf{x}_{i_2}^2)^\top\langle \mathbf{x}_{i_1}^1, \mathbf{x}_{i_2}^2 \rangle ... \langle \mathbf{x}_{i_{L-1}}^{L-1}, \mathbf{x}_{i_L}^L \rangle\langle \mathbf{x}_{i_1}^1, \mathbf{v} \rangle\langle \mathbf{x}_{i_3}^3, \mathbf{v} \rangle ... \langle \mathbf{x}_{i_{L-1}}^{L-1}, \mathbf{v} \rangle\langle \mathbf{x}_{i_L}^L, \mathbf{v} \rangle^2$$
$$= \mathbf{X}_1 \boldsymbol{\Lambda} \mathbf{X}_1^\top \mathbf{X}_2 \mathbf{M X}_2^\top$$
where
$$\boldsymbol{\Lambda} = \text{diag}\left(\lambda_1^1\langle \mathbf{x}_1^1, \mathbf{v} \rangle,...,\lambda_{r_{\mathcal{X}_1}}^1\langle \mathbf{x}_{r_{\mathcal{X}_1}}^1, \mathbf{v} \rangle,0,...,0\right) \hspace{1.5cm} \mathbf{M} = \text{diag}\left(\mu_1,...,\mu_{r_{\mathcal{X}_2}},0,...,0\right)$$
and
$$\mu_j = \sum_{i_3 = 1}^{r_{\mathcal{X}_3}}...\sum_{i_L = 1}^{r_{\mathcal{X}_L}} \lambda_j^2\lambda_{i_3}^3 ... \lambda_{i_L}^L \langle \mathbf{x}_{i_2}^2, \mathbf{x}_{i_3}^3 \rangle ... \langle \mathbf{x}_{i_{L-1}}^{L-1}, \mathbf{x}_{i_L}^L \rangle\langle \mathbf{x}_{i_3}^3, \mathbf{v} \rangle ... \langle \mathbf{x}_{i_{L-1}}^{L-1}, \mathbf{v} \rangle\langle \mathbf{x}_{i_L}^L, \mathbf{v} \rangle^2$$
Multiplying on the left by $\mathbf{X}_1^\top$ and on the right by $\mathbf{X}_1$, we get
$$\mathbf{X}_1^\top \mathbf{TX}_1 = \boldsymbol{\Lambda} \mathbf{X}_1^\top \mathbf{X}_2 \mathbf{MX}_2^\top \mathbf{X}_1 = \boldsymbol{\Lambda} \mathbf{QMQ}^\top$$
where $\mathbf{Q} = \mathbf{X}_1^\top \mathbf{X}_2$ is orthogonal. Suppose $r_{\mathcal{X}_1} > 1$. We seek to find a diagonal  matrix $\boldsymbol{\mathcal{L}}=\text{diag}\left(\ell_1,...,\ell_{r_{\mathcal{X}_1}},1,...,1\right) \in \mathbb{R}^{n \times n}$ with which to multiply on the left of the above equation so  that the non-zero entries of $\boldsymbol{\Lambda}$ get  canceled. Since $\mathbf{QMQ}^\top$ is symmetric, the resulting matrix 
$$\text{diag}\left(\ell_1,...,\ell_{r_{\mathcal{X}_1}},1,...,1\right){\mathbf{X}_1}^\top \mathbf{T} {\mathbf{X}_1} = \boldsymbol{\mathcal{L}}{\mathbf{X}_1}^\top \mathbf{T} {\mathbf{X}_1} = \boldsymbol{\mathcal{L}}\boldsymbol{\Lambda} \mathbf{QMQ}^\top$$
will have a symmetric top-left $r_{\mathcal{X}_1} \times r_{\mathcal{X}_1}$  block. If $r_{\mathcal{X}_1} = 1$, this symmetrizing procedure is unnecessary, since a top-left $1 \times 1$ corner block of any matrix is symmetric. This condition allows us to find a linear system in the unknowns $\ell_1, \ldots, \ell_{r_{\mathcal X_1}}$ which has the following form. Let $\overline{\mathbf{T}} \in \mathbb{R}^{r_{\mathcal{X}_1} \times r_{\mathcal{X}_1}}$ be the top-left $r_{\mathcal{X}_1} \times r_{\mathcal{X}_1}$ corner block of the matrix ${\mathbf{X}_1}^\top \mathbf{T} {\mathbf{X}_1}$. Form the matrix $\pmb{\mathscr{L}} \in \mathbb{R}^{{r_{\mathcal{X}_1} \choose 2} \times r_{\mathcal{X}_1}}$, where for each row we choose two indices $i,j \in [r_{\mathcal{X}_1}], \; i \neq j$, let the $i^{\text{th}}$  entry of the row equal $\overline{\mathbf{T}}_{i,j}$, the $j^{\text{th}}$  entry of the row equal $-\overline{\mathbf{T}}_{j,i}$, and the remaining  entries be $0$. Then we have the following result:
\begin{proposition}\label{prop:nullspace}
If $\mathcal{T}$ is generated by orthogonally decomposable carriages whose vectors and coefficients are generic, then $\text{nullsp}(\pmb{\mathscr{L}}) = \text{span}((\lambda_1^{-1},...,\lambda_{r_{\mathcal{X}_1}}^{-1}))$, which gives us precisely $\ell_1,\ldots, \ell_{r_{\mathcal X_1}}$.
\end{proposition}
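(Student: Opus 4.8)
The plan is to reduce the statement to one genericity fact about the symmetric matrix $B:=\mathbf{Q}\mathbf{M}\mathbf{Q}^\top$, namely that its off-diagonal entries in the top-left $r_{\mathcal{X}_1}\times r_{\mathcal{X}_1}$ block are generically nonzero. First I would record the shape of $\overline{\mathbf{T}}$. Since $\mathbf{X}_1^\top\mathbf{T}\mathbf{X}_1=\boldsymbol{\Lambda}B$ with $\boldsymbol{\Lambda}$ diagonal, for $i,j\in[r_{\mathcal{X}_1}]$ we have $\overline{\mathbf{T}}_{ij}=\lambda_i B_{ij}$, where $\lambda_i:=\lambda_i^1\langle\mathbf{x}_i^1,\mathbf{v}\rangle$ is the $i$-th diagonal entry of $\boldsymbol{\Lambda}$ (this is the $\lambda_i$ appearing in the statement; it is nonzero for generic coefficients and generic $\mathbf{v}$), and $B_{ij}=B_{ji}$ because $B$ is symmetric. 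Note $\overline{\mathbf{T}}_{ij}=(\mathbf{x}_i^1)^\top\mathbf{T}\mathbf{x}_j^1$ involves only the genuine carriage vectors, so $\overline{\mathbf{T}}$, and hence $\pmb{\mathscr{L}}$, does not depend on the chosen completions $\mathbf{X}_1,\mathbf{X}_2$. With this, the row of $\pmb{\mathscr{L}}$ indexed by a pair $i\neq j$ is the equation $\overline{\mathbf{T}}_{ij}\ell_i-\overline{\mathbf{T}}_{ji}\ell_j=B_{ij}\bigl(\lambda_i\ell_i-\lambda_j\ell_j\bigr)=0$. Substituting $\ell_k=\lambda_k^{-1}$ turns every such expression into $B_{ij}-B_{ij}=0$, so $\text{span}\bigl((\lambda_1^{-1},\dots,\lambda_{r_{\mathcal{X}_1}}^{-1})\bigr)\subseteq\text{nullsp}(\pmb{\mathscr{L}})$, determined (as it must be) only up to the harmless global scaling.

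For the reverse inclusion it is enough to prove $\text{rank}(\pmb{\mathscr{L}})=r_{\mathcal{X}_1}-1$; the case $r_{\mathcal{X}_1}=1$ is vacuous since then $\pmb{\mathscr{L}}$ has no rows and $\text{nullsp}(\pmb{\mathscr{L}})=\mathbb{R}$. Perform the invertible substitution $f_k:=\lambda_k\ell_k$: the $(i,j)$-row becomes $B_{ij}(f_i-f_j)=0$, i.e. it is the vector $B_{ij}(\mathbf{e}_i-\mathbf{e}_j)$ in $f$-coordinates. Hence, if $B_{ij}\neq0$ for all $i\neq j\in[r_{\mathcal{X}_1}]$, the row space is $\text{span}\{\mathbf{e}_i-\mathbf{e}_j:i<j\}$, the $(r_{\mathcal{X}_1}-1)$-dimensional space orthogonal to the all-ones vector, so $\text{rank}(\pmb{\mathscr{L}})=r_{\mathcal{X}_1}-1$ and $\text{nullsp}(\pmb{\mathscr{L}})$ is exactly one-dimensional. (It would in fact suffice that the graph on $[r_{\mathcal{X}_1}]$ with an edge $\{i,j\}$ whenever $B_{ij}\neq0$ be connected, but generically every edge is present.)

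It remains to prove the claim that $B_{ij}\neq0$ generically for each $i\neq j$. Here I would use the explicit formula, valid for $i,j\in[r_{\mathcal{X}_1}]$,
\[
B_{ij}=\sum_{k=1}^{r_{\mathcal{X}_2}}\mu_k\,\langle\mathbf{x}_i^1,\mathbf{x}_k^2\rangle\,\langle\mathbf{x}_j^1,\mathbf{x}_k^2\rangle,\qquad \mu_k=\lambda_k^2\,c_k,
\]
where $c_k$ is the polynomial in $\{\lambda^m\}_{m\ge3}$, $\{\mathbf{x}^m\}_{m\ge2}$ and $\mathbf{v}$ obtained by factoring $\lambda_k^2$ out of the formula for $\mu_k$ above. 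Each $B_{ij}$ is a real-analytic function on the parameter space $\mathcal{P}$ (a product of Stiefel manifolds for the orthonormal frames and Euclidean spaces for the coefficients and for $\mathbf{v}$), which has finitely many connected components, and on each component the zero set of a not-identically-zero real-analytic function has measure zero; so it suffices to exhibit one point of $\mathcal{P}$ at which $B_{ij}\neq0$ for every $i\neq j$ (equivalently, at which $\prod_{i<j}B_{ij}\neq0$). Take $\lambda_2^2=\dots=\lambda_{r_{\mathcal{X}_2}}^2=0$, which forces $\mu_2=\dots=\mu_{r_{\mathcal{X}_2}}=0$ while $\mu_1=\lambda_1^2c_1$; also set $\lambda_k^m=0$ for all $k\ge2$ and $m\ge3$, collapsing $c_1$ to a single product of coefficients and inner products, nonzero for generic remaining data; finally take $\mathbf{x}_1^2$ not orthogonal to any $\mathbf{x}_i^1$. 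Then $B_{ij}=\mu_1\langle\mathbf{x}_i^1,\mathbf{x}_1^2\rangle\langle\mathbf{x}_j^1,\mathbf{x}_1^2\rangle\neq0$ for all $i\neq j$. Intersecting the resulting generic conditions over the finitely many pairs $i\neq j$, together with the finitely many conditions $\lambda_i\neq0$, yields the proposition.

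I expect the only real obstacle to be this last step: not the algebra, but making the genericity argument airtight despite the orthonormality constraints, i.e. confirming that the $B_{ij}$ are not forced to vanish by some identity among inner products of orthonormal vectors. The witness above, in which all but one $\lambda_k^2$ and all $\lambda_k^m$ with $k\ge2$, $m\ge3$ are set to zero, is what makes this transparent, since it collapses $B_{ij}$ to a single rank-one term whose nonvanishing is obvious. Finally, the same argument applies verbatim to every kernel-completion step of Algorithm~\ref{symm-odeco-alg}: the only structural facts used are that $\mathbf{X}_1^\top\mathbf{T}\mathbf{X}_1$ factors as a diagonal matrix (generically invertible on its support) times a symmetric matrix, and that the symmetric factor has generically nonzero off-diagonal entries in the relevant block.
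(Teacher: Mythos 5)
Your proof is correct and is built on the same central observation as the paper's: that the nullspace conditions reduce to $B_{ij}(\lambda_i\ell_i-\lambda_j\ell_j)=0$, where $B=\mathbf{QMQ}^\top$ (your $B$ is the paper's $\mathbf{R}$ on the relevant block), and that $B_{ij}\neq 0$ generically forces the one-dimensional solution space. Where you diverge is in the two pieces surrounding that pivot. First, the paper concludes via an inductive lemma that rewrites $\ell_i\lambda_i=\ell_j\lambda_j$ as $\ell_i=C/\lambda_i$; you instead substitute $f_k=\lambda_k\ell_k$ and observe that the rows of $\pmb{\mathscr{L}}$, in $f$-coordinates, span exactly the hyperplane orthogonal to the all-ones vector, giving $\operatorname{rank}(\pmb{\mathscr{L}})=r_{\mathcal{X}_1}-1$ directly. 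Your version is cleaner and also sidesteps a small wrinkle in the paper's lemma, which is stated for $\ell_i\neq 0$ and thus implicitly needs the separate (easy) remark that any nullvector with one zero entry is the zero vector. Second, and more substantively, the paper simply asserts $\mathbf{R}_{ij}\neq 0$ from genericity with no argument; you supply one, exhibiting a witness in the constrained parameter space (Stiefel manifolds times Euclidean factors) where $B_{ij}$ collapses to a single nonzero rank-one term and invoking the measure-zero vanishing locus of a not-identically-zero real-analytic function. That is the piece the paper's proof is really missing, and your treatment of it is careful and correct; your observation that a connected non-vanishing graph on $[r_{\mathcal{X}_1}]$ would already suffice is also a nice strengthening, though not needed.
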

\begin{proof}
Let $\mathbf{R} \in \mathbb{R}^{r_{\mathcal{X}_1} \times r_{\mathcal{X}_1}}$ be the top-left $r_{\mathcal{X}_1} \times r_{\mathcal{X}_1}$ corner block of $\mathbf{QMQ}^\top$, which is symmetric: $\mathbf{R} = \mathbf{R}^\top$. Then by the definition of $\pmb{\mathscr{L}}$, $(\ell_1, ..., \ell_{r_{\mathcal{X}_1}}) \in \text{nullsp}(\pmb{\mathscr{L}})$ satisfies
$$\text{diag}\left(\ell_1,...,\ell_{r_{\mathcal{X}_1}}\right)\text{diag}\left(\lambda_1,...,\lambda_{r_{\mathcal{X}_1}}\right)\mathbf{R} = \left(\text{diag}\left(\ell_1,...,\ell_{r_{\mathcal{X}_1}}\right)\text{diag}\left(\lambda_1,...,\lambda_{r_{\mathcal{X}_1}}\right)\mathbf{R}\right)^\top$$
$$=\mathbf{R}^\top\text{diag}\left(\lambda_1,...,\lambda_{r_{\mathcal{X}_1}}\right)\text{diag}\left(\ell_1,...,\ell_{r_{\mathcal{X}_1}}\right) = \mathbf{R}\text{diag}\left(\lambda_1,...,\lambda_{r_{\mathcal{X}_1}}\right)\text{diag}\left(\ell_1,...,\ell_{r_{\mathcal{X}_1}}\right)$$
This means that $\ell_i\lambda_i \mathbf{R}_{ij} = \ell_j\lambda_j \mathbf{R}_{ij}$ for all $i,j \in [r_{\mathcal{X}_1}]$. Since we assumed that all orthonormal sets and coefficients are generic, we have $\mathbf{R}_{ij} \neq 0$. Hence, $\frac{\ell_i}{\ell_j} = \frac{\lambda_j}{\lambda_i}$. To deduce that $(\ell_1,...,\ell_{r_{\mathcal{X}_1}}) = C(\lambda_1^{-1},...,\lambda_{r_{\mathcal{X}_1}}^{-1})$ for some constant $C \in \mathbb{R}$, we prove the following lemma:
\begin{lemma}
If $\{\ell_i\}_{i = 1}^n, \{\lambda_i\}_{i = 1}^n \subset \mathbb{R}\setminus\{0\}$ are such that for all $i,j \in [n]$, $\frac{\ell_i}{\ell_j} = \frac{\lambda_j}{\lambda_i}$, then $\ell_i = \frac{C}{\lambda_i}$, for some constant $C \in \mathbb{R}$.
\end{lemma}
\textit{Proof.} We proceed by induction on $n$. When $n = 1$, the condition $\frac{\ell_1}{\ell_1} = 1 = \frac{\lambda_1}{\lambda_1}$ is vacuous and there must exist a $C \in \mathbb{R}$ such that $\ell_1 = \frac{C}{\lambda_1}$. Now suppose the statement is true for a sets of size $n$. Given $\{\ell_i\}_{i = 1}^{n+1}$ and $\{\lambda_i\}_{i = 1}^{n+1}$ satisfying the conditions, we can apply the inductive hypothesis to the subsets $\{\ell_i\}_{i = 1}^n$ and $\{\lambda_i\}_{i = 1}^n$ to conclude that for all $i \in [n]$, $\ell_i = \frac{C}{\lambda_i}$. Then for any $i$, we have $\frac{\ell_{n+1}}{\ell_i} = \frac{\lambda_i}{\lambda_{n+1}} = \frac{\frac{C}{\ell_i}}{\lambda_{n+1}}$ and hence $\ell_{n+1} = \frac{C}{\lambda_{n+1}}$.
\end{proof}
\begin{example}
In this example we remark that Proposition~\ref{prop:nullspace} can fail to hold if the orthonormal sets are not assumed to be generic: Let $L = n = 3$, $r_{\mathcal{X}_1} = 2$, $r_{\mathcal{X}_2} = 1$, $\{\mathbf{x}_1^1,\mathbf{x}_2^1\} = \{\mathbf{e}_1,\mathbf{e}_2\}$, $\mathbf{x}_1^2 = \mathbf{e}_1$ where $\mathbf{e}_1, \mathbf{e}_2 \in \mathbb{R}^3$ are the first and second standard basis vectors, and let $\nu_1, \nu_2, \mu_1 \in \mathbb{R}$. Completing $\mathbf{x}_{i_1}^1$ and $\mathbf{x}_{i_2}^2$ to orthonormal bases, we have $\mathbf{X}_1 = \begin{pmatrix}
1 & 0 & 0 \\
0 & 1 & 0 \\
0 & 0 & \pm 1
\end{pmatrix}$ and $\mathbf{X}_2 = \begin{pmatrix}
1 & 0 & 0 \\
0 & b_1 & -b_2 \\
0 & b_2 & b_1
\end{pmatrix}$, for some $b_1, b_2 \in \mathbb{R}$ such that $b_1^2 + b_2^2 = 1$. Hence, $\boldsymbol{\Lambda} {\mathbf{X}_1}^\top \mathbf{B M B}^\top {\mathbf{X}_1} = \begin{pmatrix}
\nu_1 & 0 & 0 \\
0 & \nu_2 & 0 \\
0 & 0 & 0
\end{pmatrix}\begin{pmatrix}
1 & 0 & 0 \\
0 & b_1 & -b_2 \\
0 & \pm b_2 & \pm b_1
\end{pmatrix}\begin{pmatrix}
\mu_1 & 0 & 0 \\
0 & 0 & 0 \\
0 & 0 & 0
\end{pmatrix}\begin{pmatrix}
1 & 0 & 0 \\
0 & b_1 & \pm b_2 \\
0 & -b_2 & \pm b_1
\end{pmatrix} = \begin{pmatrix}
\nu_1\mu_1 & 0 & 0 \\
0 & 0 & 0 \\
0 & 0 & 0
\end{pmatrix}$. From the top-left $r_{\mathcal{X}_1} \times r_{\mathcal{X}_1} = 2 \times 2$ corner block $\overline{\mathbf{T}} = \begin{pmatrix}
\nu_1\mu_1 & 0 \\
0 & 0
\end{pmatrix}$ of this matrix, we construct $\pmb{\mathscr{L}} = \begin{pmatrix} 0 & 0 \end{pmatrix}$, which has nullity $2 > 1$.

\begin{example}Dropping the assumption that the coefficients are generic can also result in the proposition failing: If $L = n = 3 = r_{\mathcal{X}_1} = r_{\mathcal{X}_2} = 3$, $\{\mathbf{x}_{i_1}^1\}_{i_1 = 1}^3$ and $\{\mathbf{x}_{i_2}^2\}_{i_2 = 1}^3$ are generic orthonormal sets, $\mathbf{X}_1, \mathbf{X}_2 \in \mathbb{R}^{3 \times 3}$ have columns $\mathbf{x}_{i_1}^1$ and $\mathbf{x}_{i_2}^2$ respectively, and $\nu_{i_1} = \mu_{i_2} = 1$, then $\boldsymbol{\Lambda} \mathbf{X}_1^\top \mathbf{X}_2 \mathbf{M}  \mathbf{X}_2^\top \mathbf{X}_1 = \mathbf{I}_{3 \times 3} \mathbf{X}_1^\top \mathbf{X}_2 \mathbf{I}_{3 \times 3} \mathbf{X}_2^\top \mathbf{X}_1 = \mathbf{I}_{3 \times 3}$ whose top-left $r_{\mathcal{X}_1} \times r_{\mathcal{X}_1} = 3 \times 3$ corner block is $\overline{\mathbf{T}} = \mathbf{I}_{3 \times 3}$. Then $\pmb{\mathscr{L}} = \mathbf{0}_{3 \times 3}$ which has nullity $3 > 1$.
\end{example}

Note that while these are valid counterexamples, $\pmb{\mathscr{L}}$ having nullity greater than $1$ is not an issue in practice since any vector in the nullspace can be used for the symmetrizing procedure.
\end{example}

Thus, we let $\ell_i$ be the entries of a non-zero vector in $\text{nullsp}(\pmb{\mathscr{L}})$. It follows that the top-left $r_{\mathcal{X}_1} \times r_{\mathcal{X}_1}$ corner blocks of $\boldsymbol{\mathcal{L}} {\mathbf{X}_1}^\top \mathbf{T}\mathbf{X}_1$ and $\mathbf{Q}\widetilde{\mathbf{M}}\mathbf{Q}^\top$ are equal, where $\widetilde{\mathbf{M}} = C\mathbf{M}$ for some constant $C \in \mathbb{R}$. In fact, since multiplication on the left by a diagonal matrix corresponds to scaling of rows, this means that the first $r_{\mathcal{X}_1}$ rows of $\boldsymbol{\mathcal{L}}{\mathbf{X}_1}^\top \mathbf{T}\mathbf{X}_1$ and $\mathbf{Q}\widetilde{\mathbf{M}}\mathbf{Q}^\top$ are equal. Note that the entries in rows $r_{\mathcal{X}_1}+1$ to $n$ are all $0$. By the symmetry of $\mathbf{Q}\widetilde{\mathbf{M}}\mathbf{Q}^\top$, we can find the first $r_{\mathcal{X}_1}$ columns of $\mathbf{Q}\widetilde{\mathbf{M}}\mathbf{Q}^\top$. Hence, we have a symmetric $\mathbf{S} \in \mathbb{R}^{n \times n}$ with a bottom-right $(n - r_{\mathcal{X}_1}) \times (n - r_{\mathcal{X}_1})$ corner block of $0$'s
$$\mathbf{S} = \begin{pmatrix}
s_{11} & & \hdots & & & s_{1n} \\
\vdots & & & & & \vdots \\
s_{r_{\mathcal{X}_1}1} & & \hdots & & & s_{r_{\mathcal{X}_1},n} \\
s_{r_{\mathcal{X}_1}+1,1} & \hdots & s_{r_{\mathcal{X}_1}+1,r_{\mathcal{X}_1}} & 0 & \hdots & 0 \\
 \vdots & & \vdots & \vdots & & \vdots \\
 s_{n1} & \hdots & s_{n,r_{\mathcal{X}_1}} & 0 & \hdots &  0
\end{pmatrix}.$$
The non-zero entries of $\mathbf{S}$ and $\mathbf{Q}\widetilde{\mathbf{M}}\mathbf{Q}^\top$ are equal. The next step is to determine what entries $\widetilde{s}_{ij}$ should be filled in block of $0$'s in $\mathbf{S}$ such that the resulting matrix $\widetilde{\mathbf{S}}$ is equal to $\mathbf{Q}\widetilde{\mathbf{M}}\mathbf{Q}^\top$. Now we use the DRC: Suppose $r_{\mathcal{X}_1} \geq r_{\mathcal{X}_2}$. Then after performing Gaussian elimination on $\widetilde{\mathbf{S}}$, $\widetilde{s}_{r_{\mathcal{X}_1}+1,i} = 0$ for $r_{\mathcal{X}_1}+1 \leq i \leq n$. Thus we replace these entries in $\mathbf{S}$ with variables $x_i$
% $$\hat{\mathbf{S}}^{(1)} = \begin{pmatrix}
% s_{11} & & & \hdots & & s_{1n} \\
% \vdots & & & & & \vdots \\
% s_{r_{\mathcal{X}_1}1} & & & \hdots & & s_{r_{\mathcal{X}_1},n} \\
% s_{r_{\mathcal{X}_1}+1,1} & \hdots & s_{r_{\mathcal{X}_1}+1,r_{\mathcal{X}_1}} & x_{r_{\mathcal{X}_1}+1} & \hdots & x_n \\
% s_{r_{\mathcal{X}_1}+2,1} & \hdots & s_{r_{\mathcal{X}_1}+2,r_{\mathcal{X}_1}} & 0 & \hdots & 0 \\
%  \vdots & & \vdots & \vdots & & \vdots \\
%  s_{n1} & \hdots & s_{n,r_{\mathcal{X}_1}} & 0 & \hdots & 0
% \end{pmatrix}$$
and perform Gaussian elimination. This will result in a linear expression for each $x_i$. Setting them equal to $0$, we solve for the unique values $\widetilde{s}_{r_{\mathcal{X}_1}+1,i}$.
% $$\begin{pmatrix}
% s_{11} & & & \hdots & & s_{1n} \\
% \vdots & & & & & \vdots \\
% s_{r_{\mathcal{X}_1}1} & & & \hdots & & s_{r_{\mathcal{X}_1},n} \\
% s_{r_{\mathcal{X}_1}+1,1} & \hdots & s_{r_{\mathcal{X}_1}+1,r_{\mathcal{X}_1}} & \widetilde{s}_{r_{\mathcal{X}_1}+1,r_{\mathcal{X}_1}+1} & \hdots & \widetilde{s}_{r_{\mathcal{X}_1}+1,n} \\
% s_{r_{\mathcal{X}_1}+2,1} & \hdots & s_{r_{\mathcal{X}_1}+2,r_{\mathcal{X}_1}} & 0 & \hdots & 0 \\
%  \vdots & & \vdots & \vdots & & \vdots \\
%  s_{n1} & \hdots & s_{n,r_{\mathcal{X}_1}} & 0 & \hdots & 0
% \end{pmatrix}$$
In addition, since $\widetilde{\mathbf{S}}$ is symmetric, we know the values $\widetilde{s}_{i,r_{\mathcal{X}_1}+1}$.
$$\widetilde{\mathbf{S}}^{(1)} = \begin{pmatrix}
s_{11} & & & \hdots & & & s_{1n} \\
\vdots & & & & & & \vdots \\
s_{r_{\mathcal{X}_1}1} & & & \hdots & & & s_{r_{\mathcal{X}_1},n} \\
s_{r_{\mathcal{X}_1}+1,1} & \hdots & s_{r_{\mathcal{X}_1}+1,r_{\mathcal{X}_1}} & \widetilde{s}_{r_{\mathcal{X}_1}+1,r_{\mathcal{X}_1}+1} & \widetilde{s}_{r_{\mathcal{X}_1}+1,r_{\mathcal{X}_1}+2} & \hdots & \widetilde{s}_{r_{\mathcal{X}_1}+1,n} \\
s_{r_{\mathcal{X}_1}+2,1} & \hdots & s_{r_{\mathcal{X}_1}+2,r_{\mathcal{X}_1}} & \widetilde{s}_{r_{\mathcal{X}_1}+1,r_{\mathcal{X}_1}+2} & 0 & \hdots & 0 \\
 \vdots & & \vdots & \vdots & \vdots & & \vdots \\
 s_{n1} & \hdots & s_{n,r_{\mathcal{X}_1}} & \widetilde{s}_{r_{\mathcal{X}_1}+1,n} & 0 & \hdots & 0
\end{pmatrix}$$
We then repeat this procedure $n - r_{\mathcal{X}_1}$ more times. The final iteration will be $\widetilde{\mathbf{S}}^{(n - (r_{\mathcal{X}_1}+1))} = \widetilde{\mathbf{S}}$. Since $\mathbf{Q}\widetilde{\mathbf{M}}\mathbf{Q}^\top = {\mathbf{X}_1}^\top \mathbf{X}_2 \widetilde{\mathbf{M}} \mathbf{X}_2^\top {\mathbf{X}_1}$ is such a matrix satisfying this Gaussian elimination property when $r_{\mathcal{X}_1} \geq r_{\mathcal{X}_2}$, it follows from the uniqueness of $\tilde{s}_{ij}$ that $\widetilde{\mathbf{S}} = {\mathbf{X}_1}^\top \mathbf{X}_2 \widetilde{\mathbf{M}} \mathbf{X}_2^\top {\mathbf{X}_1}$. Thus we can obtain $\mathbf{x}_{i_2}^2$ UTPS via an eigendecomposition on ${\mathbf{X}_1} \widetilde{\mathbf{S}} {\mathbf{X}_1}^\top = \mathbf{X}_2\widetilde{\mathbf{M}} \mathbf{X}_2^\top$. If, however, $r_{\mathcal{X}_2} > r_{\mathcal{X}_1}$, then we can still apply this procedure to obtain an orthonormal set. In either case, we denote the set as $\{\mathbf{x}_{i_2}^{2,\text{LR}}\}_{i_2 = 1}^{r_{\mathcal{X}_2,\text{LR}}}$. This completes the description of the kernel completion method.

The set $\{\mathbf{x}_{i_2}^{2,\text{LR}}\}_{i_2 = 1}^{r_{\mathcal{X}_2,\text{LR}}}$ may differ from the  set $\{\mathbf{x}_{i_2}^2\}$, even UTPS. This could happen if $r_{\mathcal{X}_2,\text{LR}} > r_{\mathcal{X}_1}$, since then $r_{\mathcal{X}_2} \leq r_{\mathcal{X}_3} \leq ... \leq r_{\mathcal{X}_L}$ is true in the DRC. In this case, the direction of the decomposition (from left to right) is incorrect and we terminate the decomposition. All other sets $\{\mathbf{x}_{i_j}^{j,\text{LR}}\}_{i_j = 1}^{r_{\mathcal{X}_j,\text{LR}}}$ for $3 \leq j \leq L-1$ are assigned to be empty. Otherwise, there is no knowing whether $\{\mathbf{x}_{i_2}^{2,\text{LR}}\}_{i_2 = 1}^{r_{\mathcal{X}_2,\text{LR}}}$ is the correct set and we continue to sequentially apply kernel completion. For our example, the next iteration of kernel completion to find the set $\{\mathbf{x}_{i_3}^3\}$ would be applied to $\mathbf{T} = \mathcal{T}(\mathbf{v},\mathbf{v},\cdot,\cdot,\mathbf{v},...,\mathbf{v})$ where $\mathbf{v} \in \mathbb{R}^n$ is a generic vector. We check on each iteration whether $r_{\mathcal{X}_j,\text{LR}} > r_{\mathcal{X}_{j-1},\text{LR}}$ and take the necessary course of action. The decomposition finishes by either terminating at some point or obtaining the orthonormal set $\{\mathbf{x}_{i_{L-1}}^{L-1,\text{LR}}\}_{i_{L-1} = 1}^{r_{\mathcal{X}_{L-1},\text{LR}}}$.

{\textbf{Step 2: Decomposing the train from right to left}}

This step is the same as Step 1 but with ``LR'' interchanged with ``RL'' and $\mathcal{X}_1$ interchanged with $\mathcal{X}_L$, $\mathcal{X}_2$ interchanged with $\mathcal{X}_{L-1}$, etc.

{\textbf{Step 3: Choosing the correct orthonormal sets from the two decompositions}}

If for any $2 \leq j \leq L-1$, $\{\mathbf{x}_{i_j}^{j,\text{LR}}\}_{i_j = 1}^{r_{\mathcal{X}_j,\text{LR}}}$ and $\{\mathbf{x}_{i_j}^{j,\text{RL}}\}_{i_j = 1}^{r_{\mathcal{X}_j,\text{RL}}}$ have different ranks, then whichever has the higher rank is the correct set UTPS. The potential issue is choosing a correct set when $r_{\mathcal{X}_j,\text{LR}} = r_{\mathcal{X}_j,\text{RL}}$, for one of the sets might be incorrect. This, however, cannot happen; if, without loss of generality, $\{\mathbf{x}_{i_j}^{j,\text{LR}}\}_{i_j = 1}^{r_{\mathcal{X}_j,\text{LR}}}$ is incorrect, then it would have been because $r_{\mathcal{X}_j} > r_{\mathcal{X}_{j^\prime}}$, for some $1 \leq j^\prime < j$. But then by the DRC, we would have observed that $r_{\mathcal{X}_j} = r_{\mathcal{X}_j,\text{RL}} > r_{\mathcal{X}_j,\text{LR}}$, contradicting $r_{\mathcal{X}_j,\text{LR}} = r_{\mathcal{X}_j,\text{RL}}$.

{\textbf{Step 4: Finding the coefficients}}

Since the tensor $\mathcal{R}$ defined in~\eqref{eq:Rtensor} is a rank-$1$ tensor, we can apply a tensor decomposition algorithm such as the Alternating Least Squares (ALS) algorithm presented in \cite{KoBa09} to find the coefficients. We have observed in simulations that the rank-$1$ ALS algorithm always converges in one step. The coefficients are, of course, not unique since if $\nu_1,...,\nu_L \in \mathbb{R}\setminus\{0\}$ are non-units, then
$$\boldsymbol\lambda_1 \otimes ... \otimes \boldsymbol\lambda_L = \left(\nu_1...\nu_L\boldsymbol\lambda_1\right)\otimes\left(\frac{1}{\nu_1}\boldsymbol\lambda_2\right)\otimes...\otimes\left(\frac{1}{\nu_L}\boldsymbol\lambda_L\right)$$
is another rank-$1$ decomposition of $\mathcal{R}$.

This completes the description of our algorithm for decomposing tensors according to  an  orthogonal tensor train of length $L\geq  3$ with symmetric and orthogonal carriages. The pseudocode can be found in Algorithm~\ref{symm-odeco-alg}, and our simulation results are in Section~\ref{sec:7}. Notice that if $L = 1$, then Algorithm \ref{symm-odeco-alg} is none other than the slice method of Kolda \cite{kolda2015symmetric}, and if $L = 2$, then Algorithm \ref{symm-odeco-alg} reduces to Algorithm \ref{symmL2_alg} for symmetric orthogonal tensor trains. We have distinguished these two algorithms since the whitening procedure can be applied to symmetric but non-orthogonal tensor trains of length 2. Extending this procedure to longer trains is one of our open problems in Section \ref{sec:8}. From our analysis in this section, we have deduced the following result:

\begin{theorem}
Let $L \geq 1$. If $\mathcal{T} \in \mathbb{R}^{n^{L+2}}$ is an $(L+2)$-tensor generated by carriages $\displaystyle \mathcal{X}_j = \sum_{i_j = 1}^{r_{\mathcal{X}_j}} \lambda_{i_j}^j(\mathbf{x}_{i_j}^j)^{\otimes 3}$ where $\lambda_{i_j}^j \in \mathbb{R}$ are generic coefficients and $\{\mathbf{x}_{i_j}^j\}_{i_j = 1}^{r_{\mathcal{X}_j}} \subset \mathbb{R}^n$ are generic orthonormal sets of rank $r_{\mathcal{X}_j}$ for all $j \in [L]$, and $\mathcal{T}$ satisfies the Decreasing Ranks Condition (c.f. Definition \ref{drc}), then Algorithm \ref{symm-odeco-alg} recovers all of the ranks of $\mathcal{T}$, and all of the vectors and coefficients of $\mathcal{T}$ UTPS.
\end{theorem}

\begin{algorithm}
\caption{Orthogonal Symmetric Decomposition for Tensor Trains of Length $L \geq 3$}\label{symm-odeco-alg}
\textbf{Input:}
$\mathcal{T} \in \mathbb{R}^{n^{L+2}}$, an orthogonal symmetric tensor train of length $L \geq 3$ satisfying the DRC

\textbf{Output:}
$\boldsymbol{\lambda}_j \in \mathbb{R}^{r_{\mathcal{X}_j}},\{\mathbf{x}_{i_j}^j\}_{i_j = 1}^{r_{\mathcal{X}_j}} \subset \mathbb{R}^n$, the coefficients and orthonormal vectors which generate $\mathcal{T}$, correct UTPS in the vectors and non-zero scaling in the coefficients, for $j \in [L]$

\begin{algorithmic}[1]
\State $\mathbf{v} \gets$ generic vector in $\mathbb{R}^n$
\State $\{\sigma_{i_1}, \mathbf{x}_{i_1}^1\}_{i_1=1}^{r_{\mathcal{X}_1}} \gets $ eigenpairs of $\mathcal{T}(\cdot,\cdot,\mathbf{v},...,\mathbf{v})$ with $\sigma_{i_1} \neq 0$
\For {$j = 2,...,L-1$}
    \State $\mathbf{A} \gets$ $n \times n$ orthogonal matrix whose first $r_{\mathcal{X}_{j-1,\text{LR}}}$ columns are $\mathbf{x}_{i_{j-1}}^{j-1,\text{LR}}$
    \State $\mathbf{v} \gets$ generic vector in $\mathbb{R}^n$
    \State $\mathbf{T} \gets \mathcal{T}(\mathbf{v},...,\mathbf{v},\cdot,\cdot,\mathbf{v},...,\mathbf{v})$ where modes $j-1$ and $j$ have not been contracted
    \State $\mathbf{S} \gets \mathbf{A}^\top \mathbf{T}\mathbf{A}$
    \State $\overline{\mathbf{T}} \gets$ top-left $r_{\mathcal{X}_{j-1},\text{LR}} \times r_{\mathcal{X}_{j-1},\text{LR}}$ corner block of $\mathbf{S}$
    \If {$r_{\mathcal{X}_{j-1},\text{LR}} > 1$}
        \State \multiline{$\pmb{\mathscr{L}} \gets \mathbb{R}^{{r_{\mathcal{X}_{j-1},\text{LR}} \choose 2} \times r_{\mathcal{X}_{j-1},\text{LR}}}$ matrix where each row corresponds to a pair $(i,j)$, $i \neq j$, $i,j \in [r_{\mathcal{X}_{j-1},\text{LR}}]$, column entry $i$ and $j$ are $\overline{\mathbf{T}}_{i,j}$ and $-\overline{\mathbf{T}}_{j,i}$, and remaining entries are $0$}
        \State $(\ell_1,...,\ell_{r_{\mathcal{X}_{j-1},\text{LR}}}) \gets$ non-zero vector in $\text{nullsp}(\pmb{\mathscr{L}})$
        \State $\mathbf{S} \gets \text{diag}(\ell_1,...,\ell_{r_{\mathcal{X}_{j-1},\text{LR}}},1,...,1)\mathbf{S}$
        \State $\mathbf{S} \gets$ first $r_{\mathcal{X}_{j-1},\text{LR}}$ columns are equal to the transpose of first $r_{\mathcal{X}_{j-1},\text{LR}}$ rows of $\mathbf{S}$
    \EndIf
    \For {$k = r_{\mathcal{X}_{j-1},\text{LR}}+1,...,n$}
        \State $\mathbf{R} \gets$ replace entries $\mathbf{S}_{k,m}$ with variables $x_m$, $k \leq m \leq n$
        \State $\mathbf{R} \gets $ perform Gaussian elimination on $\mathbf{R}$
        \State $x_m \gets$ set all entries in $\mathbf{R}$ containing $x_m$ to $0$ and solve for each $x_m$
        \State $\mathbf{S} \gets$ replace entries $\mathbf{S}_{k,m}$ and $\mathbf{S}_{m,k}$ with the values $x_m$
    \EndFor
    \State $\{\sigma_{i_j}, \mathbf{x}_{i_j}^{j,\text{LR}}\}_{i_j=1}^{r_{\mathcal{X}_j,\text{LR}}} \gets $ eigenpairs of $\mathbf{A}\mathbf{S}\mathbf{A}^\top$ with $\sigma_{i_j} \neq 0$
    \If {$r_{\mathcal{X}_j,\text{LR}} > r_{\mathcal{X}_{j-1},\text{LR}}$}
        \State $\{\mathbf{x}_{i_k}^{k,\text{LR}}\}_{i_k=1}^{r_{\mathcal{X}_k,\text{LR}}} \gets \emptyset$, $j \leq k \leq L-1$
        \State \textbf{break}
    \EndIf
\EndFor
\State \multiline{Repeat steps $1$ to $26$, with ``LR'' interchanged with ``RL'', $j = 2,...,L-1$ with $j = L-1,...,2$, $j-1$ with $j+1$, step $1$ replaced with $\mathcal{T}(\mathbf{v},...,\mathbf{v},\cdot,\cdot)$, and step $7$ replaced with $\mathbf{S} \gets \mathbf{A}^\top \mathbf{T}^\top\mathbf{A}$}
\State \multiline{$\{\mathbf{x}_{i_j}^j\}_{i_j = 1}^{r_{\mathcal{X}_j}} \gets \{\mathbf{x}_{i_j}^{j,\text{LR}}\}_{i_j = 1}^{r_{\mathcal{X}_{j,\text{LR}}}},\{\mathbf{x}_{i_j}^{j,\text{RL}}\}_{i_j = 1}^{r_{\mathcal{X}_{j,\text{RL}}}}$, whichever of $r_{\mathcal{X}_{j,\text{LR}}}$ or $r_{\mathcal{X}_{j,\text{RL}}}$ is greater, $2 \leq j \leq L-1$. If they are the same, make an arbitrary choice}
\State $\mathcal{R} \gets \mathbb{R}^{r_{\mathcal{X}_1}\times...\times r_{\mathcal{X}_L}}$ tensor with entries $\mathcal{R}_{\hat{i}_1...\hat{i}_L} = \frac{\mathcal{T}(\mathbf{x}_{\hat{i}_1}^1,...,\mathbf{x}_{\hat{i}_L}^L)}{\prod_{k = 1}^{L-1}\langle \mathbf{x}_{\hat{i}_k}^k, \mathbf{x}_{\hat{i}_{k+1}}^{k+1} \rangle}$, $\hat{i}_j \in [r_{\mathcal{X}_j}]$, $j \in [L]$
\State $\boldsymbol\lambda_j \gets$ rank-1 ALS algorithm on $\mathcal{R}$, $j \in [L]$
\end{algorithmic}
\end{algorithm}

\medskip

\section{Orthogonal Decomposition of Tensor Trains of Length 2}\label{sec:5}

We now  turn to the problem of decomposing  4-tensors according to tensor trains of length 2 such that the carriages are orthogonal but not necessarily symmetric.

\begin{problem}{\label{odeco-prob}}
Let $\mathcal{T} \in \mathbb{R}^{n_\mathbf{A} \times n_\mathbf{B} \times n_\mathbf{D} \times n_\mathbf{E}}$ be a $4$-tensor admitting the following decomposition:
$$\mathcal{T} = \sum_{i=1}^{r_\mathcal{L}} \sum_{j=1}^{r_\mathcal{R}} \lambda_i \mu_j \mathbf{a}_i \otimes \mathbf{b}_i \otimes \mathbf{d}_j \otimes \mathbf{e}_j \langle \mathbf{c}_i, \mathbf{f}_j \rangle$$
where $\{\mathbf{a}_i\}_{i=1}^{r_\mathcal{L}} \subset \mathbb{R}^{n_\mathbf{A}}$, $\{\mathbf{b}_i\}_{i=1}^{r_\mathcal{L}} \subset \mathbb{R}^{n_\mathbf{B}}$,  $\{\mathbf{c}_i\}_{i=1}^{r_\mathcal{L}} \subset \mathbb{R}^{n_\mathbf{C}}$,  $\{\mathbf{d}_j\}_{j=1}^{r_\mathcal{R}} \subset \mathbb{R}^{n_\mathbf{D}}$, $\{\mathbf{e}_j\}_{j=1}^{r_\mathcal{R}} \subset \mathbb{R}^{n_\mathbf{E}}$,  $\{\mathbf{f}_j\}_{j=1}^{r_\mathcal{R}} \subset \mathbb{R}^{n_\mathbf{F}}$ are generic orthonormal sets, $\lambda_i,  \mu_j \in \mathbb{R}$ are generic, and $d = n_{\mathbf{C}} = n_{\mathbf{F}}$ (necessarily $d \geq r_\mathcal{L}, r_\mathcal{R}$ due to orthonormality). In other words, assume that $\mathcal{T}$ is a tensor train generated by
$$\mathcal{L} = \sum_{i=1}^{r_\mathcal{L}} \lambda_i \mathbf{a}_i \otimes \mathbf{b}_i \otimes \mathbf{c}_i \hspace{2cm} \mathcal{R} = \sum_{j=1}^{r_\mathcal{R}} \mu_j \mathbf{d}_j \otimes \mathbf{e}_j \otimes \mathbf{f}_j$$
Given $\mathcal T$, we wish to find all ranks, vectors, and coefficients.
\end{problem}

We first note that the vectors $\mathbf{a}_i$, $\mathbf{b}_i$, $\mathbf{d}_j$, and $\mathbf{e}_j$ can be found by adapting Kolda's slice method for  orthogonal tensors~\cite{kolda2015symmetric}. As before, define the matrices
$$\mathbf{S}_\mathcal{L} = \sum_{i_3 = 1}^n \sum_{i_4 = 1}^n \alpha_{i_3 i_4} \mathcal{T}(:, :, i_3, i_4) = \sum_{i=1}^{r_\mathcal{L}} \sigma_i \mathbf{a}_i \mathbf{b}_i^\top = \mathbf{A}\boldsymbol\Sigma\mathbf{B}^\top$$
$$\mathbf{S}_\mathcal{R} = \sum_{i_1 = 1}^n \sum_{i_2 = 1}^n \beta_{i_1 i_2} \mathcal{T}(i_1, i_2, :, :) = \sum_{j=1}^{r_\mathcal{R}} \gamma_j \mathbf{d}_j \mathbf{e}_j^\top = \mathbf{D}\boldsymbol\Gamma\mathbf{E}^\top$$
where $\boldsymbol\Sigma \in \mathbb{R}^{r_{\mathcal{L}} \times r_{\mathcal{L}}}, \boldsymbol\Gamma \in \mathbb{R}^{r_{\mathcal{R}} \times r_{\mathcal{R}}}$ are diagonal with non-zero diagonal entries, and $\mathbf{A} \in \mathbb{R}^{n_\mathbf{A} \times r_\mathcal{L}}$, $\mathbf{B} \in \mathbb{R}^{n_\mathbf{B} \times r_\mathcal{L}}$, $\mathbf{C}\in\mathbb R^{n_{\mathbf C}\times r_{\mathcal L}}$ $\mathbf{D} \in \mathbb{R}^{n_\mathbf{D} \times r_\mathcal{R}}$, $\mathbf{E} \in \mathbb{R}^{n_\mathbf{E} \times r_\mathcal{R}}$, $\mathbf F\in\mathbb R^{n_{\mathbf F}\times r_{\mathcal R}}$ are the matrices that have $\{\mathbf{a}_i\}$, $\{\mathbf{b}_i\}$, $\{\mathbf c_i\}$ $\{\mathbf{d}_j\}$, $\{\mathbf{e}_j\}$, and $\{\mathbf f_{j}\}$  as their columns, respectively. Then $\mathbf A,  \mathbf B, \mathbf D,$ and $\mathbf E$ can be found by finding the SVD of $\mathbf{S}_\mathcal{L}$ and $\mathbf{S}_\mathcal{R}$. Next we see that for each $i \in [r_{\mathcal{L}}], j \in [r_{\mathcal{R}}]$, $\mathcal{T}(\mathbf{a}_i,\mathbf{b}_i,\mathbf{d}_j,\mathbf{e}_j) = \lambda_i \mu_j \langle \mathbf{c}_i, \mathbf{f}_j \rangle$, which defines the entries of a matrix $\overline{\mathbf{X}} \in \mathbb{R}^{r_{\mathcal{L}} \times r_{\mathcal{R}}}$. Note that $n_{\mathbf C} = n_{\mathbf F}$ since they correspond to the same edge of the tensor network. Let $d = n_{\mathbf C} = n_{\mathbf F}$, and suppose we know $d$. Then we can define the 0-padded  $d\times d$ matrix
\begin{equation}{\label{x-dodd-eq}}
\mathbf{X} =
\begin{tikzpicture}[baseline={([yshift=-.5ex]current bounding box.center)},vertex/.style={anchor=base,
    circle,fill=black!25,minimum size=18pt,inner sep=2pt}]
    \matrix [matrix of math nodes,left delimiter=(,right delimiter=)] (m)
    {
        \; & \; & \; & \; & \; & \; \\ 
        \; & \overline{\mathbf{X}} & \; & \; & \; & \; \\ 
        \; & \; & \; & \; & \; & \; \\ 
        \; & \; & \; & \; & \; & \; \\ 
        \; & \; & \; & \; & \mathbf{0} & \; \\ 
        \; & \; & \; & \; & \; & \; \\ 
    };
    \draw (m-3-1.south west) -- (m-3-3.south east);
    \draw (m-3-3.south east) -- (m-1-3.north east);
\end{tikzpicture} 
=
\left(\begin{array}{cccccccc} 
\lambda_1 \mu_1 \langle \mathbf{c}_1, \mathbf{f}_1 \rangle & & \hdots & & \multicolumn{1}{c|}{\lambda_1 \mu_{r_\mathcal{R}} \langle \mathbf{c}_1, \mathbf{f}_{r_\mathcal{R}} \rangle} & 0 & \hdots & 0 \\
& & & & \multicolumn{1}{c|}{\phantom{0}} & \vdots & & \vdots \\
\vdots & & \ddots & & \multicolumn{1}{c|}{\vdots} & & & \\
& & & & \multicolumn{1}{c|}{\phantom{0}} & & & \\
\lambda_{r_\mathcal{L}} \mu_1 \langle \mathbf{c}_{r_\mathcal{L}}, \mathbf{f}_1 \rangle & & \hdots & & \multicolumn{1}{c|}{\lambda_1 \mu_{r_\mathcal{R}} \langle \mathbf{c}_1, \mathbf{f}_{r_\mathcal{R}} \rangle} & \vdots & & \\
\cmidrule{1-5}
0 & \hdots & & & \hdots & 0 & & \\
\vdots & & & & & & & \vdots \\
0 & \hdots & & & & & \hdots & 0
\end{array}\right)
\end{equation}
% $$\mathbf{X} = \begin{pmatrix}
% \lambda_1 \mu_1 \langle \mathbf{c}_1, \mathbf{f}_1 \rangle & & \hdots & & \lambda_1 \mu_{r_\mathcal{R}} \langle \mathbf{c}_1, \mathbf{f}_{r_\mathcal{R}} \rangle \\
% & & & & \\
% \vdots & & \ddots & & \vdots \\
% & & & & \\
% \lambda_{r_\mathcal{L}} \mu_1 \langle \mathbf{c}_{r_\mathcal{L}}, \mathbf{f}_1 \rangle & & \hdots & & \lambda_{r_\mathcal{L}} \mu_{r_\mathcal{R}} \langle \mathbf{c}_{r_\mathcal{L}}, \mathbf{f}_{r_\mathcal{R}} \rangle
% \end{pmatrix}$$
\begin{equation*}
= \text{diag}\left(\lambda_1,...,\lambda_{r_{\mathcal{L}}},0,...,0\right)\widehat{\mathbf{C}}^\top\widehat{\mathbf{F}}\text{diag}\left(\mu_1,...,\mu_{r_{\mathcal{R}}},0,...,0\right) = \boldsymbol{\Lambda}\widehat{\mathbf{C}}^\top\widehat{\mathbf{F}}\mathbf{M} \in \mathbb{R}^{d \times d}.   
\end{equation*}
% $$= \boldsymbol{\Lambda} \mathbf{C}^\top \mathbf{F M} = \begin{pmatrix}
% \lambda_1 & & \\
%  & \ddots & \\
%  & & \lambda_{r_\mathcal{L}} 
% \end{pmatrix} 
% \left(\begin{array}{ccc} 
% \horzbar & \mathbf{c}_1^\top & \horzbar \\
% \horzbar & \mathbf{c}_2^\top & \horzbar \\
% & \vdots & \\
% \horzbar & \mathbf{c}_{r_\mathcal{L}}^\top & \horzbar 
% \end{array}\right)
% \left(\begin{array}{cccc} 
% \vertbar & \vertbar & & \vertbar \\
% \mathbf{f}_1 & \mathbf{f}_2 & \hdots & \mathbf{f}_{r_\mathcal{R}} \\
% \vertbar & \vertbar & & \vertbar 
% \end{array}\right)
% \begin{pmatrix}
% \mu_1 & & \\
%  & \ddots & \\
%  & & \mu_{r_\mathcal{R}} 
% \end{pmatrix}$$
where $\widehat{\mathbf{C}},\widehat{\mathbf{F}} \in \mathbb{R}^{d \times d}$ are orthogonal matrices whose first $r_{\mathcal{L}}$ and $r_{\mathcal{R}}$ columns are $\mathbf{C}$ and $\mathbf{F}$. To solve {Problem \ref{odeco-prob}}, we need to find $\boldsymbol\Lambda$, $\mathbf{M}$, $\widehat{\mathbf{C}}$, and $\widehat{\mathbf{F}}$ such that $\mathbf{X} = \boldsymbol{\Lambda}\widehat{\mathbf{C}}^\top\widehat{\mathbf{F}}\mathbf{M}$. If a solution exists, it is not unique since $\mathbf{C}^\prime = \widehat{\mathbf{C}}^\top\widehat{\mathbf{F}}$ and $\mathbf{F}^\prime = \mathbf{I}_{d \times d}$ is also a solution. Hence we can instead ask to find an orthogonal matrix $\mathbf{Q} = \widehat{\mathbf{C}}^\top\widehat{\mathbf{F}} \in \mathbb{R}^{d \times d}$ such that $\mathbf{X} = \boldsymbol{\Lambda}\mathbf{Q}\mathbf{M}$. Lastly, we can assume $\boldsymbol\Lambda$ and $\mathbf{M}$ have non-negative diagonal entries since if they do not, we can ``push'' the negative signs of these entries into $\mathbf{Q}$ without affecting its orthogonality. Note that even with this assumption, if a solution exists,  then $\mathbf \Lambda$  and $\mathbf M$ are still not unique due to a global non-zero scaling.

A matrix of the form $\boldsymbol{\Lambda}\mathbf{Q}\mathbf{M}$ is said to have a \textbf{Diagonal-Orthogonal-Diagonal Decomposition (DODD)}, which is the subject of Section \ref{sec:6}. Note that there could be many possible $d$ for which $\mathbf{X}$ in $(\ref{x-dodd-eq})$ has a DODD. Thus to solve {Problem \ref{odeco-prob}} (see pseudocode in Algorithm~\ref{odeco-len2-alg}), we solve the problem of finding such a $d$ and $\boldsymbol\Lambda$, $\mathbf{Q}$, and $\mathbf{M}$; this is formally stated in {Problem \ref{dodd-problem}}. When $r_\mathcal{L} = r_\mathcal{R}$ and we know that $d = r_\mathcal{L} = r_\mathcal{R}$ gives an $\mathbf{X}$ with a DODD, which we call the \textbf{square case}, we show that {Problem \ref{dodd-problem}} can be solved using two different approaches: one based on Sinkhorn's algorithm \cite{sinkhorn1967} and the other based on the Tandem Procrustes algorithm \cite{Everson97}. We call the complementary case the \textbf{general case} and we present a solution by showing that the Procrustes-based algorithm can be generalized. Lastly, similar to {Remark \ref{symm-odeco-2-remark}} in {Section \ref{sec:3}}, we add that one can easily generalize to the case where each carriage is an $m$-tensor, for some $m > 3$, but cannot easily address the case where there is more than one contracted edge between $\mathcal{L}$ and $\mathcal{R}$.
%(NOTE: INVESTIGATE THIS IN THE CODE. Maybe the $n$ retrieved as the rank of the matricization is too large; especially in the general case, the freedom in finding a solution simply by making $n$ larger than the ranks by 1 or 2. Also investigate what happens if the underlying vectors are not generic)

\begin{algorithm}[t]
\caption{Orthogonal Decomposition for Tensor Trains of Length 2}{\label{odeco-len2-alg}}

\textbf{Input:}

$\mathcal{T} \in \mathbb{R}^{n_\mathbf{A} \times n_\mathbf{B} \times n_\mathbf{D} \times n_\mathbf{E}}$, a length $2$ orthogonal tensor train

\textbf{Output:}

$r_\mathcal{L}$ and $r_\mathcal{R}$, the ranks of $\mathcal{L}$ and $\mathcal{R}$

$\{\mathbf{a}_i\}_{i=1}^{r_\mathcal{L}}$, $\{\mathbf{b}_i\}_{i=1}^{r_\mathcal{L}}$,  $\{\mathbf{c}_i\}_{i=1}^{r_\mathcal{L}}$,  $\{\mathbf{d}_j\}_{j=1}^{r_\mathcal{R}}$, $\{\mathbf{e}_j\}_{j=1}^{r_\mathcal{R}}$,  $\{\mathbf{f}_j\}_{j=1}^{r_\mathcal{R}}$, the orthonormal vectors generating $\mathcal{T}$

$\{\lambda_i\}_{i = 1}^{r_\mathcal{L}}$ and $\{\mu_j\}_{j = 1}^{r_\mathcal{R}}$, the coefficients generating $\mathcal{T}$
\begin{algorithmic}[1]
\State $\alpha, \beta \gets $ generic real $(n_\mathbf{A} \times n_\mathbf{B})$ and $(n_\mathbf{D} \times n_\mathbf{E})$ matrices
\State $\mathbf{S}_\mathcal{L} \gets \sum_{i_3 = 1}^{n_\mathbf{D}} \sum_{i_4 = 1}^{n_\mathbf{E}} \alpha_{i_3 i_4} {\mathcal{T}}(:, :, i_3, i_4)$
\State $\mathbf{S}_\mathcal{R} \gets \sum_{i_1 = 1}^{n_\mathbf{A}} \sum_{i_2 = 1}^{n_\mathbf{B}} \beta_{i_1 i_2} {\mathcal{T}}(i_1, i_2, :, :)$
\State $\{\mathbf{{a}}_i, \mathbf{{b}}_i\}_{i=1}^{r_\mathcal{L}} \gets $ left and right singular vectors of $\mathbf{S}_\mathcal{L}$ with nonzero singular values
\State $\{\mathbf{{d}}_j, \mathbf{{e}}_j\}_{j=1}^{r_\mathcal{R}} \gets $ left and right singular vectors of $\mathbf{S}_\mathcal{R}$ with nonzero singular values
\State $\overline{\mathbf{X}}_{ij} \gets {\mathcal{T}}({\mathbf{a}}_i,{\mathbf{b}}_i,{\mathbf{d}}_j,{\mathbf{e}}_j)$ for $i \in [r_\mathcal{L}], \; j \in [r_\mathcal{R}]$
\If {$m = n$ and we know that $d = m = n$ gives a DODD for the $d \times d$ 0-padding of $\overline{\mathbf{X}}$}
    \State $\mathbf{\Lambda}$, $\mathbf{Q}$, $\mathbf{M}$ $\gets$ \textsc{square\_dodd}$(\mathbf{T})$
\Else
    \State $d \gets $ a value at least $\text{max}\{r_{\mathcal{L}},r_{\mathcal{R}}\}$ for which the $d \times d$ 0-padding of $\overline{\mathbf{X}}$ admits a DODD
    \State $\mathbf{\Lambda}$, $\mathbf{Q}$, $\mathbf{M}$ $\gets$ \textsc{dodd}$(\mathbf{T}, d)$
\EndIf
\State $\lambda_i \gets$ the first $r_\mathcal{L}$ diagonal entries of $\boldsymbol\Lambda$ (which are non-zero)
\State $\mu_i \gets$ the first $r_\mathcal{R}$ diagonal entries of $\mathbf{M}$ (which are non-zero)
\State $\mathbf{c}_i \gets$ the first $r_\mathcal{L}$ rows of $\mathbf{Q}$
\State $\mathbf{f}_j \gets$ the first $r_\mathcal{R}$ standard basis vectors of $\mathbb{R}^d$
\end{algorithmic}
\end{algorithm}

{

\subsection{Applications}{\label{sec::5.1}}

The decomposition of length-2 tensor trains is closely related to the tensor hypercontraction (THC) method of compressing the electron repulsion integral (ERI) tensor common in electronic structure theory, first described in \cite{hohenstein2012} and \cite{parrish2012}. One seeks an approximate factorization of the fourth-order ERI tensor $\mathcal{R}$ of the form

\begin{equation}{\label{eri}}
\mathcal{R}_{\mu\nu\lambda\sigma} = \sum_{P,Q} (\mathbf{x}_P)_{\mu} (\mathbf{x}_P)_{\nu} \mathbf{Z}_{P,Q} (\mathbf{x}_Q)_{\lambda} (\mathbf{x}_Q)_{\sigma}    
\end{equation}
where $\mathbf{x}_P$ are vectors and $\mathbf{Z}$ is known as the intermediate matrix. To find this, the PARAFAC-THC method is proposed, which first finds a factorization of $\mathcal{R}$ as the contraction of two third-order tensors $\mathcal{S}$ and $\mathcal{T}$, and three matrices $\mathbf{L}$, $\mathbf{M}$, and $\mathbf{N}$, as shown in Figure \ref{electron}. Using the Alternating Least Squares algorithm \cite{KoBa09}, a factorization

$$\mathcal{T} = \sum_{P} \mathbf{x}_P \otimes \mathbf{x}_P \otimes \mathbf{y}_P \quad \quad \quad \quad \quad \mathcal{S} = \sum_{Q} \mathbf{x}_Q \otimes \mathbf{x}_Q \otimes \mathbf{y}_Q$$
is obtained, and setting $\mathbf{Z}_{P,Q} = \sum_{A,B,C,D} (\mathbf{y}_P)_A \mathbf{L}_{A,B}\mathbf{M}_{B,C}\mathbf{N}_{C,D}(\mathbf{y}_Q)_D$ completes the factorization (\ref{eri}).

Now suppose $\{\mathbf{x}_P\}_P$ is an orthonormal set. Then by fully contracting the tensor network in Figure \ref{electron} into a length-2 tensor train, we see that the vectors $\mathbf{x}_P$ can also be approximated (or in the square case, found exactly) using Algorithm \ref{odeco-len2-alg}. Following this, the values of $\mathbf{Z}_{P,Q}$ can easily be recovered without knowing $\mathbf{L}$, $\mathbf{M}$, or $\mathbf{N}$. \cite{lu2015compression} also proposes another approximate solution using Fast Fourier Transforms.

}

\begin{figure}[H]
\begin{center}

\tikzset{every picture/.style={line width=0.75pt}} %set default line width to 0.75pt        

\begin{tikzpicture}[x=0.75pt,y=0.75pt,yscale=-1,xscale=1]
%uncomment if require: \path (0,123); %set diagram left start at 0, and has height of 123

%Straight Lines [id:da9842022030238771] 
\draw    (140.49,59.36) -- (199.16,59.36) ;
%Flowchart: Connector [id:dp8068188799004306] 
\draw  [fill={rgb, 255:red, 0; green, 0; blue, 0 }  ,fill opacity=1 ] (196.88,59.36) .. controls (196.88,58.04) and (197.9,56.97) .. (199.16,56.97) .. controls (200.41,56.97) and (201.43,58.04) .. (201.43,59.36) .. controls (201.43,60.67) and (200.41,61.74) .. (199.16,61.74) .. controls (197.9,61.74) and (196.88,60.67) .. (196.88,59.36) -- cycle ;
%Straight Lines [id:da9833265313724373] 
\draw    (199.16,59.36) -- (257.82,59.36) ;

%Straight Lines [id:da1411584780750621] 
\draw    (275.19,59.22) -- (333.86,59.22) ;
%Flowchart: Connector [id:dp959804437243478] 
\draw  [fill={rgb, 255:red, 0; green, 0; blue, 0 }  ,fill opacity=1 ] (331.58,59.22) .. controls (331.58,57.9) and (332.6,56.83) .. (333.86,56.83) .. controls (335.11,56.83) and (336.13,57.9) .. (336.13,59.22) .. controls (336.13,60.54) and (335.11,61.61) .. (333.86,61.61) .. controls (332.6,61.61) and (331.58,60.54) .. (331.58,59.22) -- cycle ;
%Straight Lines [id:da7010716660557688] 
\draw    (333.86,59.22) -- (392.52,59.22) ;

%Straight Lines [id:da03843826176513643] 
\draw    (410.59,59.02) -- (469.26,59.02) ;
%Flowchart: Connector [id:dp7080567366796702] 
\draw  [fill={rgb, 255:red, 0; green, 0; blue, 0 }  ,fill opacity=1 ] (466.98,59.02) .. controls (466.98,57.7) and (468,56.63) .. (469.26,56.63) .. controls (470.51,56.63) and (471.53,57.7) .. (471.53,59.02) .. controls (471.53,60.34) and (470.51,61.41) .. (469.26,61.41) .. controls (468,61.41) and (466.98,60.34) .. (466.98,59.02) -- cycle ;
%Straight Lines [id:da23456424502383766] 
\draw    (469.26,59.02) -- (527.92,59.02) ;

%Straight Lines [id:da07571411219225377] 
\draw    (656.33,97) -- (606.25,59.46) ;
%Straight Lines [id:da6931684674650815] 
\draw    (606.25,59.46) -- (656.33,28.2) ;
%Straight Lines [id:da8522606455897856] 
\draw    (606.25,59.46) -- (547.59,58.86) ;
%Flowchart: Connector [id:dp9851381291747099] 
\draw  [fill={rgb, 255:red, 0; green, 0; blue, 0 }  ,fill opacity=1 ] (609.8,59.5) .. controls (609.79,60.82) and (608.76,61.87) .. (607.5,61.86) .. controls (606.24,61.85) and (605.24,60.77) .. (605.25,59.45) .. controls (605.26,58.13) and (606.29,57.07) .. (607.55,57.08) .. controls (608.81,57.1) and (609.82,58.18) .. (609.8,59.5) -- cycle ;

%Straight Lines [id:da8253701544953134] 
\draw    (14.13,21.69) -- (64.28,59.15) ;
%Straight Lines [id:da439557804513542] 
\draw    (64.28,59.15) -- (14.25,90.49) ;
%Straight Lines [id:da17485434518080112] 
\draw    (64.28,59.15) -- (122.94,59.65) ;
%Flowchart: Connector [id:dp5114524209261098] 
\draw  [fill={rgb, 255:red, 0; green, 0; blue, 0 }  ,fill opacity=1 ] (60.72,59.12) .. controls (60.73,57.8) and (61.76,56.74) .. (63.02,56.75) .. controls (64.28,56.76) and (65.29,57.84) .. (65.28,59.16) .. controls (65.26,60.48) and (64.24,61.54) .. (62.98,61.53) .. controls (61.72,61.51) and (60.71,60.44) .. (60.72,59.12) -- cycle ;

% Text Node
\draw (40.4,23) node [anchor=north west][inner sep=0.75pt]  [font=\small]  {$\mathbf{x}_{P}$};
% Text Node
\draw (41.22,76.42) node [anchor=north west][inner sep=0.75pt]  [font=\small]  {$\mathbf{x_{P}}$};
% Text Node
\draw (78.4,37.2) node [anchor=north west][inner sep=0.75pt]  [font=\small]  {$\mathbf{y_{P}}$};
% Text Node
\draw (602.4,22.2) node [anchor=north west][inner sep=0.75pt]  [font=\small]  {$\mathbf{x_{Q}}$};
% Text Node
\draw (603.4,76.6) node [anchor=north west][inner sep=0.75pt]  [font=\small]  {$\mathbf{x_{Q}}$};
% Text Node
\draw (566.8,38.2) node [anchor=north west][inner sep=0.75pt]  [font=\small]  {$\mathbf{y_{Q}}$};
% Text Node
\draw (194.4,38.4) node [anchor=north west][inner sep=0.75pt]  [font=\small]  {$\mathbf{L}$};
% Text Node
\draw (326.6,37.6) node [anchor=north west][inner sep=0.75pt]  [font=\small]  {$\mathbf{M}$};
% Text Node
\draw (462.8,37.6) node [anchor=north west][inner sep=0.75pt]  [font=\small]  {$\mathbf{N}$};

\end{tikzpicture}

\end{center}
\caption{{{A PARAFAC-THC decomposition \cite{parrish2012} of the ERI tensor.}}}
\label{electron}
\end{figure}
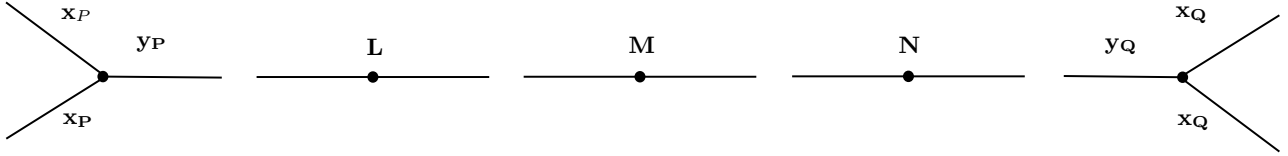

\medskip

\section{Matrix Diagonal-Orthogonal-Diagonal Decomposition}
\label{sec:6}

We now discuss the problem of finding a diagonal-orthogonal-diagonal decomposition of a given matrix. Let $m = r_\mathcal{L}$ and $n = r_\mathcal{R}$, from the previous section.

\begin{problem}{\label{dodd-problem}}
Find $d \geq m,n$ such that $\mathbf{X}$, the $d\times d$ 0-padding of the matrix $\overline{\mathbf{X}}$ in $(\ref{x-dodd-eq})$, admits a decomposition of the form $$\mathbf{X} = \text{diag}\left(\lambda_1,...,\lambda_{m},0,...,0\right)\mathbf{Q}\text{diag}\left(\mu_1,...,\mu_n,0,...,0\right) = \boldsymbol{\Lambda} \mathbf{Q}\mathbf{M} \in \mathbb{R}^{d \times d}$$
where $\mathbf{Q} \in \mathbb{R}^{d \times d}$ is an orthogonal matrix, and $\lambda_i,\mu_j \in \mathbb{R}$. Given such an $\mathbf{X}$, find $\boldsymbol{\Lambda}$, $\mathbf{Q}$, and $\mathbf{M}$.
\end{problem}

\subsection{The Square Case: A Sinkhorn-Based Algorithm}

We first solve the case when $m = n$ and we know that $\overline{\mathbf{X}}$ has a DODD for $d = m = n$. We do this by modifying Sinkhorn's algorithm \cite{sinkhorn1967}, and we additionally require that $\mathbf{X}$ has no entries equal to 0. Let $\cdot^{*2}$ and $\sqrt[*]{\cdot}$ denote the entry-wise square and square root of a matrix, and let $\odot$ denote the Hadamard product of two matrices. Then, $\mathbf X^{*2} = \mathbf \Lambda^2 \mathbf Q^{*2}\mathbf M^2$.
Every entry of $\mathbf{X}^{*2}$ is positive, and since $\mathbf{Q}$ is orthogonal, $\mathbf{Q}^{*2}$ is a doubly stochastic matrix. Sinkhorn's algorithm~\cite{sinkhorn1967} applied to $\mathbf{X}^{*2}$ first rescales all rows of the matrix $\mathbf X^{*2}$ so they each sum to 1, then it rescales all columns  so  they sum to  1, and then the rows, alternating until convergence. The total rescaling of rows and columns is recorded in two diagonal matrices, $\widetilde{\boldsymbol{\Lambda}}$ and $\widetilde{\mathbf{M}}$, respectively.
Sinkhorn's Theorem \cite{sinkhorn1967} guarantees that this algorithm converges and yields the unique doubly stochastic matrix $\mathbf Q^{*2}$ and two positive diagonal matrices $\widetilde{\boldsymbol{\Lambda}}$ and $\widetilde{\mathbf{M}}$, unique up to non-zero, positive scaling, such that $\mathbf X^{*2} = \widetilde{\boldsymbol\Lambda} \mathbf Q^{*2}\widetilde{\mathbf M}$. Hence, we obtain $\mathbf{Q}^{*2}={\widetilde{\boldsymbol{\Lambda}}}^{-1}\mathbf{X}^{*2}{\widetilde{\mathbf{M}}}^{-1}$. The absolute values of the entries of $\mathbf{Q}$ are therefore $\sqrt[*]{\mathbf{Q}^{*2}}$. Since $\widetilde{\boldsymbol{\Lambda}}$ and $\widetilde{\mathbf{M}}$ are positive diagonal matrices, the sign modifications to the entries of $\sqrt[*]{\mathbf{Q}^{*2}}$ required for it to be orthogonal can be obtained directly from the sign pattern of $\mathbf{X}$. That is, if $\mathbf{S} \in 
\mathbb{R}^{d \times d}$ is the matrix such that $\mathbf{S}_{ij} = 1$ if $\mathbf{X}_{ij} > 0$ and $\mathbf{S}_{ij} = -1$ if $\mathbf{X}_{ij} < 0$, then $\mathbf{S} \odot \sqrt[*]{\mathbf{Q}^{*2}}$ is an orthogonal matrix. Lastly, taking the entry-wise square root of both sides of $\widetilde{\boldsymbol{\Lambda}}\mathbf{X}^{*2}\widetilde{\mathbf{M}} = \mathbf{Q}^{*2}$ together with the sign matrix $\mathbf{S}$, it follows that $\mathbf{X}= \left(\sqrt[*]{\widetilde{\boldsymbol{\Lambda}}}\right)^{-1}\left(\mathbf{S} \odot \sqrt[*]{\mathbf{Q}^{*2}}\right)\left(\sqrt[*]{\widetilde{\mathbf{M}}}\right)^{-1}$. We present pseudocode in {Algorithm \ref{sinkhorn_dodd}}. From this discussion, we have also obtained the following result:

\begin{theorem}
If $\overline{\mathbf{X}} \in \mathbb{R}^{n \times n}$ admits a decomposition of the form $\overline{\mathbf{X}} = \boldsymbol\Lambda\mathbf{Q}\mathbf{M}$ where $\boldsymbol\Lambda, \mathbf{M} \in \mathbb{R}^{n \times n}$ are diagonal matrices and $\mathbf{Q} \in \mathbb{R}^{n \times n}$ is an orthogonal matrix, and $\mathbf{X}$ has no entries equal to $0$, then Algorithm \ref{sinkhorn_dodd} is guaranteed to find $\boldsymbol\Lambda$, $\mathbf{Q}$, and $\mathbf{M}$, UTPS.
\end{theorem}

\begin{algorithm}
\caption{Sinkhorn-Based Algorithm for Square DODD}\label{sinkhorn_dodd}
\textbf{Input:} 

$\mathbf{X} \in \mathbb{R}^{d \times d}$, a matrix with an existing DODD

\textbf{Output:} 

Diagonal matrices $\boldsymbol{\Lambda}, \mathbf{M} \in \mathbb{R}^{d \times d}$ and orthogonal matrix $\mathbf{Q} \in \mathbb{R}^{d \times d}$ such that $\mathbf{X} = \boldsymbol{\Lambda} \mathbf{Q} \mathbf{M}$

\textbf{Algorithm:}

\begin{algorithmic}[1]
\Procedure{square\_dodd}{$\mathbf{X}$}
    \State \multiline{Perform Sinkhorn's algorithm on $\mathbf{X}^{*2}$, producing diagonal matrices $\tilde{\boldsymbol{\Lambda}}$ and $\tilde{\mathbf{M}}$, and a doubly stochastic matrix $\tilde{\mathbf{Q}}$ such that $\tilde{\boldsymbol{\Lambda}} \mathbf{X}^{*2} \tilde{\mathbf{M}} = \tilde{\mathbf{Q}}$}
    \State $\mathbf{Q} \gets \sqrt[*]{\tilde{\mathbf{Q}}}$
    \State \multiline{$\mathbf{S} \gets$ $d \times d$ matrix such that $\mathbf{S}_{ij} = 1$ if $\mathbf{X}_{ij} > 0$ and $\mathbf{S}_{ij} = -1$ if $\mathbf{X}_{ij} < 0$, $i,j \in [d]$}
    \State $\mathbf{Q} \gets \mathbf{S} \odot \mathbf{Q}$
    \State $\boldsymbol{\Lambda} \gets \left(\sqrt[*]{\tilde{\boldsymbol{\Lambda}}}\right)^{-1}$
    \State $\mathbf{M} \gets \left(\sqrt[*]{\tilde{\mathbf{M}}}\right)^{-1}$
    \State \textbf{return} $\boldsymbol{\Lambda}, \mathbf{Q}, \mathbf{M}$
\EndProcedure
\end{algorithmic}
\end{algorithm}

\subsection{The Square Case: A Procrustes-Based Algorithm}
The \textbf{Tandem Procrustes Algorithm} \cite{Everson97} generalizes the well-known \textbf{Procrustes Problem} \cite{procrustes} to orthogonal, but not necessarily orthonormal columns: Given a target matrix $\mathbf{A} \in \mathbb{R}^{m \times n}$ and a starting matrix $\mathbf{B} \in \mathbb{R}^{p \times n}$, it finds a matrix $\mathbf{U} \in \mathbb{R}^{m \times p}$ with orthogonal columns such that $||\mathbf{A}-\mathbf{UB}||^2$ is locally minimized, where $\mathbf{U} = \mathbf{V}\mathbf{D}$ with diagonal matrix $\mathbf{D} \in \mathbb{R}^{p \times p}$ and matrix with orthonormal columns $\mathbf{V} \in \mathbb{R}^{m \times p}$. We recall the Tandem Procrustes algorithm in Algorithm \ref{tandem_procrustes}.

\begin{algorithm}[h]
\caption{Tandem Procrustes Algorithm} \label{tandem_procrustes}
\textbf{Input:} 

$\mathbf{A} \in \mathbb{R}^{m \times n}$, the target matrix

$\mathbf{B} \in \mathbb{R}^{p \times n}$, the starting matrix 

\textbf{Output:} 

Diagonal matrix $\mathbf{D} \in \mathbb{R}^{p \times p}$ and matrix with orthonormal columns $V \in \mathbb{R}^{m \times p}$ such that

$||\mathbf{A}-\mathbf{VDB}||^2$ is locally minimized

\textbf{Algorithm:} 

\begin{algorithmic}[1]
\Procedure{tandem\_procrustes}{$\mathbf{A}, \mathbf{B}$}
    \State $\mathbf{D} \gets \mathbf{I}_q$ 
    \While {convergence not reached}
        \State $\mathbf{V} \gets $ orthogonal polar factor of $\mathbf{AB}^\top \mathbf{D}$
        \State $\mathbf{D}$ updated such that the $k$-th element on the diagonal $d_k = \frac{\sum_{i=1}^m [\mathbf{AB}^T]_{ik} \mathbf{V}_{ik}}{\sum_{i=1}^n \mathbf{B}_{ki}^2}, \; k \in [p]$
    \EndWhile
    \State \textbf{return} $\mathbf{V}, \mathbf{D}$
\EndProcedure
\end{algorithmic}
\end{algorithm}

We present an approximate solution to Problem \ref{dodd-problem} in the square case. In our solution, we initialize $\mathbf{Q} = \mathbf{X}$, $\boldsymbol{\Lambda} = \mathbf{M} = \mathbf{I}_n$. We first call Tandem Procrustes on $\mathbf{A} = \mathbf{Q}^\top$ and $\mathbf{B} = \mathbf{I}_n$ to find a matrix with orthogonal columns $\mathbf{VD}$ that approximates the rows of $\mathbf{Q}$: $\mathbf{Q} \approx \mathbf{DV}^\top$. We update $\boldsymbol{\Lambda}$ and $\mathbf{Q}$ by multiplying $\boldsymbol\Lambda$ with $\mathbf{D}$ on the right and $\mathbf{Q}$ with $\mathbf{D}^{-1}$ on the left, so that now $\mathbf{Q} \approx \mathbf{V}^\top$. Then we call Tandem Procrustes on $\mathbf{A} = \mathbf{Q}$ and $\mathbf{B} = \mathbf{I}_n$ to find a matrix with orthogonal columns $\mathbf{VD}$ that approximates the columns of $\mathbf{Q}$: $\mathbf{Q} \approx \mathbf{VD}$. We update $\mathbf{M}$ and $\mathbf{Q}$ by multiplying $\mathbf{M}$ with $\mathbf{D}$ on the left and $\mathbf{Q}$ with $\mathbf{D}^{-1}$ on the right, so that now $\mathbf{Q} \approx \mathbf{V}$. These alternating steps are repeated until the final $\mathbf{Q}$ found is an orthogonal matrix. Our solution algorithm is presented in Algorithm \ref{procrustes_dodd}.

\begin{algorithm}[h]
\caption{Procrustes-Based Algorithm for Square DODD} \label{procrustes_dodd}
\textbf{Input:}

$\mathbf{X} \in \mathbb{R}^{d \times d}$, a matrix with an existing DODD

\textbf{Output:} 

Diagonal matrices $\boldsymbol{\Lambda}, \mathbf{M} \in \mathbb{R}^{d \times d}$ and orthogonal matrix $\mathbf{Q} \in \mathbb{R}^{d \times d}$ such that $\mathbf{X} = \boldsymbol{\Lambda} \mathbf{Q} \mathbf{M}$

\textbf{Algorithm:} 

\begin{algorithmic}[1]
\Procedure{square\_dodd}{$\mathbf{X}$}
    \State $\boldsymbol{\Lambda} \gets \mathbf{I}_d$
    \State $\mathbf{Q} \gets \mathbf{X}$
    \State $\mathbf{M} \gets \mathbf{I}_d$
    \While {convergence not reached}
        \State $\mathbf{V}, \mathbf{D} \gets $ \textsc{tandem\_procrustes}$(\mathbf{A} = \mathbf{Q}^\top$, $\mathbf{B} = \mathbf{I}_d)$
        \State $\boldsymbol{\Lambda} \gets \boldsymbol{\Lambda} \mathbf{D}$
        \State $\mathbf{Q} \gets \mathbf{D}^{-1} \mathbf{Q}$
        \State $\mathbf{V}, \mathbf{D} \gets $ \textsc{tandem\_procrustes}$(\mathbf{A} = \mathbf{Q}$, $\mathbf{B} = \mathbf{I}_d)$
        \State $\mathbf{M} \gets \mathbf{D M}$
        \State $\mathbf{Q} \gets \mathbf{Q} \mathbf{D}^{-1}$
    \EndWhile
    \State \textbf{return} $\boldsymbol{\Lambda}$, $\mathbf{Q}$, $\mathbf{M}$
\EndProcedure
\end{algorithmic}
\end{algorithm}

\subsection{Remarks On The Square Case DODD Algorithms}

Notice that if a DODD exists for a square matrix, then the Sinkhorn-based algorithm recovers the decomposition only because the entry-wise square of an orthogonal matrix is incidentally doubly-stochastic. If one seeks to find a DODD or a DODD approximation for a square matrix that is not known to have a DODD, then the Sinkhorn-based algorithm can fail. On the other hand, the Procrustes-based algorithm finds a DODD approximation that is locally optimal in this case. We will soon see that this property of the Procrustes-based algorithm allows us to approximately solve the general case for Problem \ref{dodd-problem}, which the Sinkhorn-based algorithm cannot. A numerical and runtime comparison between the Sinkhorn- and Procrustes-based solutions for the square case is presented in Section \ref{sec:7.3}.

\subsection{The General Case}

Consider the general case: either $m \neq n$ or we do not know what value of $d \geq m,n$ gives a DODD for the $d \times d$ 0-padding of $\overline{\mathbf{X}}$. Based on numerical results in Section \ref{sec:7.3}, we conjecture that a DODD always exists for $\mathbf{X}$ in Problem $\ref{dodd-problem}$ for some sufficiently large $d \geq m,n$. Hence, suppose we already know the value of such a $d$. Then we present an approximate solution to Problem $\ref{dodd-problem}$ where we find $\boldsymbol\Lambda$, $\mathbf{Q}$, and $\mathbf{M}$. The solution involves modifying Algorithm \ref{procrustes_dodd}. We can initialize $\lambda_i = \mu_j = 1$ and the top-left $m \times n$ corner block of $\mathbf{Q}$ to be $\overline{\mathbf{X}}$. The issue we face is how to initialize the other entries of $\mathbf{Q}$, which we will call $\overline{\mathbf{Q}}$, since the corresponding entries in $\mathbf{X}$ are all $0$, and the final solution $\mathbf{Q}$ must be an orthogonal matrix:
$$\mathbf{Q} = \begin{tikzpicture}[baseline={([yshift=-.5ex]current bounding box.center)},vertex/.style={anchor=base,
    circle,fill=black!25,minimum size=18pt,inner sep=2pt}]
    \matrix [matrix of math nodes,left delimiter=(,right delimiter=)] (m)
    {
        \; & \; & \; & \; & \; & \; \\ 
        \; & \overline{\mathbf{X}} & \; & \; & \; & \; \\ 
        \; & \; & \; & \; & \; & \; \\ 
        \; & \; & \; & \; & \; & \; \\ 
        \; & \; & \; & \; & \overline{\mathbf{Q}} & \; \\ 
        \; & \; & \; & \; & \; & \; \\ 
    };
    \draw (m-3-1.south west) -- (m-3-3.south east);
    \draw (m-3-3.south east) -- (m-1-3.north east);
\end{tikzpicture} \in \mathbb{R}^{d \times d}$$
Let us call steps $6$ to $11$ in Algorithm \ref{procrustes_dodd} the \textbf{Tandem Procrustes iterations}. We address this issue by initializing all entries of $\overline{\mathbf{Q}}$ to be random numbers and then iteratively correcting these entries, replacing them with entries from an orthogonal matrix, until $\mathbf{Q}$ is itself orthogonal. At the same time, we apply the Tandem Procrustes iterations to find $\boldsymbol\Lambda$ and $\mathbf{M}$, thus solving Problem \ref{dodd-problem}. Choose a positive integer $\ell$, which is to be thought of as a ``learning rate''. First perform the Tandem Procrustes iterations $\ell$ times. Then replace the entries of $\overline{\mathbf{Q}}$ with the corresponding entries $\overline{\mathbf{V}}$ in $\mathbf{V}$, where $\mathbf{V}$ is the most recent orthogonal matrix found by the Tandem Procrustes iterations:
$$\mathbf{Q} \gets
\begin{tikzpicture}[baseline={([yshift=-.5ex]current bounding box.center)},vertex/.style={anchor=base,
    circle,fill=black!25,minimum size=18pt,inner sep=2pt}]
    \matrix [matrix of math nodes,left delimiter=(,right delimiter=)] (m)
    {
        \; & \; & \; & \; & \; & \; \\ 
        \; & (\text{diag}(\lambda_1,...,\lambda_m))^{-1}\overline{\mathbf{X}}(\text{diag}(\mu_1,...,\mu_n))^{-1} & \; & \; & \; & \; \\ 
        \; & \; & \; & \; & \; & \; \\ 
        \; & \; & \; & \; & \; & \; \\ 
        \; & \; & \; & \; & \overline{\mathbf{V}} & \; \\ 
        \; & \; & \; & \; & \; & \; \\ 
    };
    \draw (m-3-1.south west) -- (m-3-3.south east);
    \draw (m-3-3.south east) -- (m-1-3.north east);
\end{tikzpicture}$$
where $\lambda_i$ and $\mu_j$ are the most recent non-zero entries of $\boldsymbol\Lambda$ and $\mathbf{M}$.
Finally, repeat these two steps until convergence. In Section \ref{sec:7.3}, we demonstrate the numerical performance of this procedure. Also note that as in Algorithm \ref{procrustes_dodd}, one could have performed the Tandem Procrustes iterations until convergence rather than terminating after $\ell$ times. However, our numerical results suggest that choosing $\ell$ to be small and repeating these two steps greatly decreases the runtime of our solution while producing equally effective solutions. Algorithm \ref{general_dodd} presents pseudocode of this procedure.

\begin{algorithm}[h]
\caption{An Algorithm for General DODD} \label{general_dodd}
\textbf{Input:}

$\overline{\mathbf{X}} \in \mathbb{R}^{m \times n}$, a matrix

$d \geq m,n$ such that the $0$-padded matrix $\mathbf{X} \in \mathbb{R}^{d \times d}$, with $\overline{\mathbf{X}}$ in the top-left corner, admits a DODD

$\ell \geq 1$, the learning rate

\textbf{Output:} 

Diagonal matrices $\boldsymbol{\Lambda}, \mathbf{M} \in \mathbb{R}^{d \times d}$ and orthogonal matrix $\mathbf{Q} \in \mathbb{R}^{d \times d}$ such that $\mathbf{X} = \boldsymbol{\Lambda} \mathbf{Q} \mathbf{M}$

\textbf{Algorithm:} 

\begin{algorithmic}[1]
\Procedure{dodd}{$\mathbf{X},d,\ell$}
    \State $\boldsymbol{\Lambda} \in \mathbb{R}^{d \times d} \gets \text{diag}(1,..,1,0,...,0)$, where $1$ appears $m$ times
    \State $\mathbf{M} \in \mathbb{R}^{d \times d} \gets \text{diag}(1,..,1,0,...,0)$, where $1$ appears $n$ times
    \State $\mathbf{Q} \in \mathbb{R}^{d \times d} \gets$ top left $m \times n$ corner block is $\overline{\mathbf{X}}$ and are random numbers otherwise
    \While {convergence not reached}
        \For {$i = 1,...,\ell$}
            \State $\mathbf{V}, \mathbf{D}\gets $ Tandem\_Procrustes$(\mathbf{A} = \mathbf{Q}^\top$, $\mathbf{B} = \mathbf{I}_d)$
            \State $\boldsymbol{\Lambda} \gets \boldsymbol{\Lambda} \mathbf{D}$
            \State $\mathbf{Q} \gets \mathbf{D}^{-1} \mathbf{Q}$
            \State $\mathbf{V}, \mathbf{D} \gets $ Tandem\_Procrustes$(\mathbf{A} = \mathbf{Q}$, $\mathbf{B} = \mathbf{I}_d)$
            \State $\mathbf{M} \gets \mathbf{D M}$
            \State $\mathbf{Q} \gets \mathbf{Q} \mathbf{D}^{-1}$
        \EndFor
        \State $\mathbf{Q} \gets$ replace last $d-m$ columns and $d-n$ rows of $\mathbf{Q}$ with corresponding elements in $\mathbf{V}$
    \EndWhile
    \State \textbf{return} $\boldsymbol{\Lambda}, \mathbf{Q}, \mathbf{M}$
\EndProcedure
\end{algorithmic}
\end{algorithm}

\medskip

\section{Numerical Results}
\label{sec:7}

{{

The results for symmetric trains of length 2 and the DODD were computed using Python 3.7.6 and the TensorLy package \cite{tensorly} on a Dual-Core 1.8 GHz Intel i5 Processor, while the results for longer symmetric orthogonal trains were computed in MATLAB 2020 and the Tensor Toolbox for MATLAB package \cite{TTB_Software} on a Quad-Core 2.3 GHz Intel i5 Processor. All our code can be found at \url{https://github.com/karimhalaseh/Tensor-Network-Decompositions}.

}}

\begin{table}[t]
    \centering
    \begin{tabular}{|c|c|c|c|c|c|c|}
        \hline
        \multicolumn{5}{|c|}{Controls} & \multicolumn{2}{|c|}{Results}  \\
        \hline
        $n$ & $r_\mathcal{A}$ & $r_\mathcal{B}$ & Orthogonal? & $\sigma$ & PSD $\mathbf{C}_\mathcal{A}$, $\mathbf{C}_\mathcal{B}$? & $10^{\text{Avg. } \log_{10} \text{ Rel. Error}}$ \\
        \hline
        \multirow{3}{*}{5} & \multirow{3}{*}{2} & \multirow{3}{*}{3} & \multirow{3}{*}{Yes} & 0 & - & $2.560 \times 10^{-15}$ \\
        & & & & $10^{-6}$ & - & $9.584 \times 10^{-5}$\\
        & & & & $10^{-2}$ & - & 0.0726 \\
        \hline
        \multirow{3}{*}{5} & \multirow{3}{*}{5} & \multirow{3}{*}{5} & \multirow{3}{*}{Yes} & 0 & - & $8.928 \times 10^{-15}$ \\
        & & & & $10^{-6}$ & - & $1.607 \times 10^{-4}$ \\
        & & & & $10^{-2}$ & - & 0.113 \\
        \hline
        \multirow{3}{*}{25} & \multirow{3}{*}{7} & \multirow{3}{*}{4} & \multirow{3}{*}{Yes} & 0 & - & $1.685 \times 10^{-14}$ \\
        & & & & $10^{-6}$ & - & $2.019 \times 10^{-4}$ \\
        & & & & $10^{-2}$ & - & 0.5762 \\
        \hline
        \multirow{3}{*}{5} & \multirow{3}{*}{2} & \multirow{3}{*}{3} & \multirow{3}{*}{No} & 0 & 98 & $8.175 \times 10^{-15}$ \\
        & & & & $10^{-6}$ & 9 & 0.9999 \\
        & & & & $10^{-2}$ & 1 & 1.0003 \\
        \hline
        \multirow{3}{*}{5} & \multirow{3}{*}{5} & \multirow{3}{*}{5} & \multirow{3}{*}{No} & 0 & 97 & $6.622 \times 10^{-12}$ \\
        & & & & $10^{-6}$ & 96 & 0.2394 \\
        & & & & $10^{-2}$ & 76 & 1.3259 \\
        \hline
        \multirow{3}{*}{25} & \multirow{3}{*}{7} & \multirow{3}{*}{4} & \multirow{3}{*}{No} & 0 & 100 & $1.299 \times 10^{-14}$ \\
        & & & & $10^{-6}$ & 0 & - \\
        & & & & $10^{-2}$ & 0 & - \\
        \hline
    \end{tabular}
    \caption{Numerical results. For each size permutation, 100 tests are run.}
    \label{symmL2_results}
\end{table}

\subsection{Symmetric Orthogonal Tensor Trains of Length $2$}{\label{sym-len-2-test}}
Here we tested Algorithm \ref{symmL2_alg}. The parameters $n$, $r_\mathcal{A}$, $r_\mathcal{B}$ (cf. Section \ref{sec:3}), control the size of Problem \ref{sym-len-2-prob}, and for each choice of these parameters, 100 tests were conducted. In the same approach to testing as Kolda's slice method \cite{kolda2015symmetric}, a tensor $\mathcal{T}^*$ with an exact decomposition is artificially constructed. Then a new tensor $\mathcal{T}$ is formed by corrupting $\mathcal{T}^*$ with Gaussian noise:
\begin{equation*}%{\label{tensor-with-noise}}
\mathcal{T} = \mathcal{T}^* + \sigma \frac{||\mathcal{T}^*||}{||\mathcal{N}||} \mathcal{N}
\end{equation*}
where $\sigma$ is a noise parameter and $\mathcal{N}$ is a tensor of the same dimensions as $\mathcal{T}^*$ whose entries are independently sampled, standard normally distributed numbers. We tested for $\sigma \in \{0, 10^{-2}, 10^{-6}\}$. If whitening is applied, then the algorithm was given 200 iterations to find PSD matrices $\mathbf{C}_\mathcal{A}$ and $\mathbf{C}_\mathcal{B}$ before declaring failure. The relative error between the solution returned by the algorithm $\widehat{\mathcal{T}}$ and $\mathcal{T}$ was then computed:
$$\text{relative error} = \frac{\|\widehat{\mathcal{T}} - \mathcal{T}\|}{\|\mathcal{T}\|}$$
For a successful solution, we expect the average $\log_{10}$ relative error when raised to the power of $10$ to be on the order of $\sigma$. The results are listed in Table \ref{symmL2_results}. We see that with no noise, the algorithm in both the orthogonal and non-orthogonal cases performed well, with PSD matrices found in most tests for whitening. In the orthogonal case when noise is added, the relative error was generally one order of magnitude more than the noise, similar to Kolda's results in~\cite{kolda2015symmetric}. As in Kolda's tests for the slice method, the poorest performance was for noise in the non-orthogonal case. Kolda hypothesizes that this is due to the noisy tensor having a rank higher than the rank found by the algorithm \cite{kolda2015symmetric}.

\begin{table}[t]
    \centering
    \begin{tabular}{|c|c|c|c|c|c|}
        \hline
        \multicolumn{4}{|c|}{Controls} & \multicolumn{2}{|c|}{Results}  \\
        \hline
        $n$ & $L$ & $r_{\mathcal{X}_i}$ & $\sigma$ & $10^{\text{Avg. } \log_{10} \text{ Rel. Error}}$ & Avg. Runtime (s) \\
        \hline
        \multirow{3}{*}{4} & \multirow{3}{*}{3} & \multirow{3}{*}{2,2,2} & 0 & $4.282 \times 10^{-14}$ & 2.753 \\
        & & & $10^{-6}$ & $2.077 \times 10^{-3}$ & $2.337$ \\
        & & & $10^{-2}$ & $0.9453$ & $0.5653$ \\
        \hline
        \multirow{3}{*}{4} & \multirow{3}{*}{3} & \multirow{3}{*}{4,4,4} & 0 & $2.432 \times 10^{-13}$ & 0.2475 \\
        & & & $10^{-6}$ & $9.331 \times 10^{-4}$ & 0.2176 \\
        & & & $10^{-2}$ & 0.7622 & 0.2641 \\
        \hline
        \multirow{3}{*}{4} & \multirow{3}{*}{6} & \multirow{3}{*}{2,2,2,2,2,2} & 0 & $1.346 \times 10^{-12}$ & 4.784 \\
        & & & $10^{-6}$ & 0.1087 & 5.340 \\
        & & & $10^{-2}$ & 1.282 & 10.275 \\
        \hline
        \multirow{3}{*}{8} & \multirow{3}{*}{3} & \multirow{3}{*}{2,2,2} & 0 & $7.424 \times 10^{-14}$ & 13.123 \\
        & & & $10^{-6}$ & $5.826 \times 10^{-3}$ & 9.667 \\
        & & & $10^{-2}$ & 1.128 & 1.210 \\
        \hline
    \end{tabular}
    \caption{Numerical results. For each size and noise permutation, 100 tests are run.}
    \label{symm-orth-3-table}
\end{table}

\subsection{Symmetric Orthogonal Tensor Trains of Length $L \geq 3$}

We tested Algorithm \ref{symm-odeco-alg} on tensor trains satisfying the DRC for varying $n$, $L$, and ranks of carriages $r_{\mathcal{X}_i}$, where $i \in [L]$ (cf. Section~\ref{sec:4}). The method of testing was identical to that of Section \ref{sym-len-2-test}. The results are presented in Table \ref{symm-orth-3-table}. Without noise, the algorithm was successful, but even in the presence of small noise, the relative error was high, likely due to the rank of the tensor increasing, as Kolda hypothesized. The runtimes were also diverse due to dependence on $r_{\mathcal{X}_i}$ (since kernel completion will take longer for lower ranks) and stopping criteria whenever the rank increases.

\begin{figure}[t]
    \centering
    \begin{subfigure}{0.49\linewidth}
        \centering
        \includegraphics[scale = 0.5]{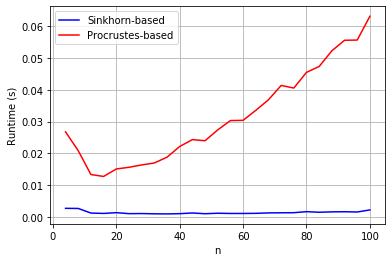}
        \caption{Runtime Comparison}
        \label{runtime}
    \end{subfigure}
    \begin{subfigure}{0.49\linewidth}
        \centering
        \includegraphics[scale = 0.5]{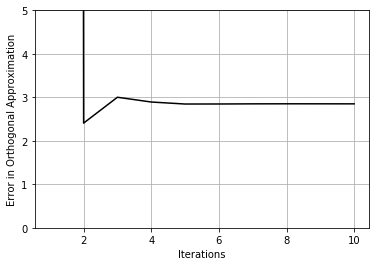}
        \caption{Performance of Procrustes-based approximation}
        \label{procrustes_approx}
    \end{subfigure}
    \caption{(a): A runtime comparison of the two square DODD algorithms; (b): An example of the error output of each Tandem Procrustes call when Procrustes-based DODD is run on a random matrix $(n = 10)$}
    \label{runtime_and_approx}
\end{figure}

\begin{figure}[h]
    \centering
    \begin{subfigure}{0.49\linewidth}
        \centering
        \includegraphics[scale = 0.525]{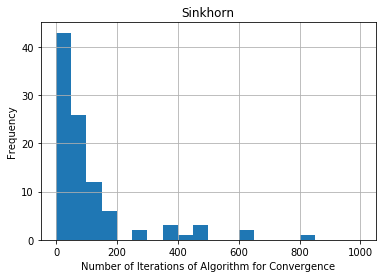}
        \caption{Sinkhorn-based DODD}
        \label{iters_sh_n3}
    \end{subfigure}
    \begin{subfigure}{0.49\linewidth}
        \centering
        \includegraphics[scale = 0.525]{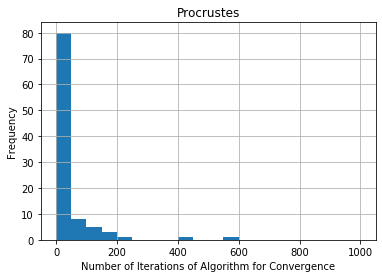}
        \caption{Procrustes-based DODD}
        \label{iters_pc_n3}
    \end{subfigure}
    \caption{Number of iterations until convergence for DODD algorithms for $n = 3$ (100 tests run)}
    \label{iters_n3}
\end{figure}

\begin{figure}[h]
    \centering
    \begin{subfigure}{0.49\linewidth}
        \centering
        \includegraphics[scale = 0.5]{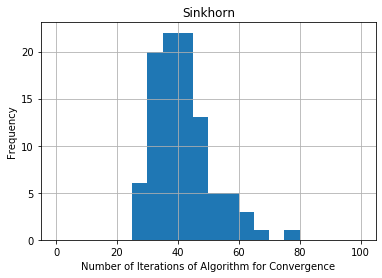}
        \caption{Sinkhorn-based DODD}
        \label{iters_sh_n10}
    \end{subfigure}
    \begin{subfigure}{0.49\linewidth}
        \centering
        \includegraphics[scale = 0.5]{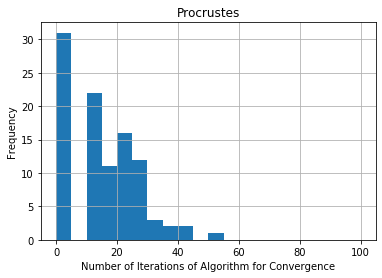}
        \caption{Procrustes-based DODD}
        \label{iters_pc_n10}
    \end{subfigure}
    \caption{Number of iterations until convergence for DODD algorithms for $n = 10$ (100 tests run)}
    \label{iters_n10}
\end{figure}

\subsection{Matrix Diagonal-Orthogonal-Diagonal Decomposition}\label{sec:7.3}

Next we tested the Sinkhorn-based Algorithm \ref{sinkhorn_dodd} and Procrustes-based Algorithm \ref{procrustes_dodd} for the square case of DODD. We constructed 100 square matrices with exact decompositions for sizes $n \in \{3,10,25\}$. For Algorithm \ref{sinkhorn_dodd}, convergence was said to be achieved when the sum of squares of differences between the row and column sums of $\mathbf{Q}^{*2}$ and $1$ was less than $10^{-28}$. For Algorithm \ref{procrustes_dodd}, the learning rate $\ell$ was set to 1, and convergence was said to be achieved when the most recent call to the Tandem Procrustes algorithm satisfied $||\mathbf{Q}-\mathbf{VD}||^2 < 10^{-28}$. Both algorithms were given a maximum of 1000 iterations before terminating if convergence was not achieved before then. Both algorithms were said to be successful if the relative error of $\mathbf{QQ}^\top$ and $\mathbf{I}_n$ was within $10^{-10}$. For all values of $n$ and all tests, both algorithms converged and were successful. Figures \ref{iters_n3} and \ref{iters_n10} show histograms of the number of iterations before convergence for each algorithm. We saw that as $n$ increased, the number of such iterations generally decreased for both algorithms. The Procrustes-based algorithm generally required fewer iterations than the Sinkhorn-based algorithm. However, as shown in Figure \ref{runtime}, the overall runtime for Algorithm \ref{sinkhorn_dodd} was faster than that of Algorithm \ref{procrustes_dodd}, since the Tandem Procrustes algorithm requires computing polar factors. Figure \ref{procrustes_approx} shows a sample of the error trajectory of the Tandem Procrustes algorithm on a random $10 \times 10$ matrix. Even for a matrix unlikely to have a square DODD, the algorithm appears to converge to a local minimum.

\begin{table}[t]
    \centering
    \begin{tabular}{|c|c|c|c|c|c|c|}
        \hline
        \multicolumn{4}{|c|}{Controls} & \multicolumn{3}{|c|}{Results}  \\
        \hline
        $m$ & $n$ & $d$ & Exact? & Rel. Error $< 10^{-10}$? & 10\^~(Avg. $\log_{10}$ Rel. Error) & Avg. Runtime $(s)$ \\
        \hline
        6 & 5 & 6 & Yes & 73 & $1.02 \times 10^{-11}$  & 1.000 \\
        6 & 5 & 6 & No & 0 & 0.769 & 1.310 \\
        \hline
        6 & 5 & 7 & Yes & 0 & $2.65 \times 10^{-3}$  & 1.246 \\
        6 & 5 & 7 & No & 0 & 0.230 & 1.277 \\
        \hline
        6 & 5 & 8 & Yes & 16 & $6.07 \times 10^{-7}$  & 1.266 \\
        6 & 5 & 8 & No & 40 & $1.30 \times 10^{-8}$ & 1.181 \\
        \hline
        6 & 5 & 9 & Yes & 54 & $3.03 \times 10^{-10}$  & 1.211 \\
        6 & 5 & 9 & No & 86 & $9.35 \times 10^{-13}$ & 1.348 \\
        \hline
        6 & 5 & 10 & Yes & 64 & $4.42 \times 10^{-11}$  & 1.246 \\
        6 & 5 & 10 & No & 85 & $7.89 \times 10^{-13}$ & 1.024 \\
        \hline
        6 & 5 & 15 & Yes & 97 & $8.57 \times 10^{-14}$  & 1.008 \\
        6 & 5 & 15 & No & 96 & $8.73 \times 10^{-14}$ & 1.035 \\
        \hline
        6 & 5 & 30 & Yes & 100 & $2.47 \times 10^{-14}$  & 1.071 \\
        6 & 5 & 30 & No & 100 & $2.68 \times 10^{-14}$ & 1.208 \\
        \hline
    \end{tabular}
    \caption{General case DODD for both the ``exact" and ``non-exact" case. For each permutation of control parameters, 100 tests are run, with each test given a maximum of 1000 iterations to converge.}
    \label{general_dodd_results}
\end{table}

Lastly, we examined Algorithm \ref{general_dodd} for the general case of DODD. For $m = 6$, $n = 5$, and each value of $d \geq m,n$ tested, we constructed 100 $m \times n$ matrices whose 0-padded $d \times d$ matrix admitted exact decompositions (``exact''), and 100 $m \times n$ matrices whose entries are independently sampled from a normal distribution with mean 0 and variance $5^2$ (``non-exact''). We then ran Algorithm \ref{general_dodd} on these matrices, inputting $d$ for both exact and non-exact tests, with learning rate $\ell = 2$. The convergence and success criteria was the same as for the square case. The results are presented in Table \ref{general_dodd_results}. We saw that when $d = \max\{m, n\}$, the algorithm mostly performed well for the exact tests and poorly performed for the non-exact tests. What was intriguing, however, was that for $d = \{m,n\} + 1$, we observed no successes for both tests, and as $d$ increased, the frequency of success also increased, reaching complete success even for the non-exact tests when $d$ was sufficiently large. This lead us to the following conjecture:

\begin{conjecture}{\label{dodd-conjecture}}
The $d \times d$ 0-padding of any matrix $\mathbf{X} \in \mathbb{R}^{m \times n}$ admits a DODD, for some $d \geq m,n$ sufficiently large. This means that every matrix $\mathbf{X} \in \mathbb{R}^{m \times n}$ has a decomposition of the form $\mathbf{X} = \boldsymbol\Lambda \overline{\mathbf{Q}}\mathbf{M}$, where $\boldsymbol\Lambda \in \mathbb{R}^{m \times m}$ and $\mathbf{M} \in \mathbb{R}^{n \times n}$ are diagonal matrices and $\overline{\mathbf{Q}} \in \mathbb{R}^{m \times n}$ is the $m \times n$ top-left corner block of an orthogonal matrix $\mathbf{Q} \in \mathbb{R}^{d \times d}$. Equivalently, $\mathbf{X}$ has a decomposition of the form $\mathbf{X} = \boldsymbol\Lambda\mathbf{C}^\top\mathbf{F}\mathbf{M}$, where $\mathbf{C} \in \mathbb{R}^{m \times d}$ is a matrix with orthonormal rows and $\mathbf{F} \in \mathbb{R}^{d \times n}$ is a matrix with orthonormal columns (we can choose $\mathbf{C}^\top$ to be the first $m$ rows of $\mathbf{Q}$ and $\mathbf{F}$ to have the first $n$ standard basis vectors in $\mathbb{R}^d$ as columns).
\end{conjecture}

\section{Conclusion and Future Work}
\label{sec:8}

In this paper, we studied the decomposition of tensors into tensor trains whose carriages are symmetric or orthogonally decomposable tensors. For length-2 trains, we showed that we can find the decomposition when the carriages are symmetric and odeco (Section~\ref{sec:3.1}), only symmetric (Section~\ref{sec:3.2}), or only odeco (Section~\ref{sec:5}). For longer length tensor trains, we studied the case when the carriages are both symmetric and odeco. For such networks, we provided algorithms for decomposition which require the Decreasing Ranks Condition (Section~\ref{sec:4}). In Section~\ref{sec:7}, we provided numerical results that support our findings.

A variety of open problems arose during our study. We showed that Kolda's whitening procedure for symmetric tensors could be adapted to the similar case of a tensor train of length 2, whereby a tensor with a symmetric decomposition could be transformed into an orthogonal problem. It remains an open problem to find an efficient equivalent to longer tensor trains, and so encompass a greater family of tensors for such a decomposition. 

\begin{problem}
Can we find a whitening procedure for symmetrically decomposable tensor trains of length greater than 2?
\end{problem}

One of the interesting mathematical problems that arose during our study was that of finding a Diagonal-Orthogonal-Diagonal Decomposition for matrices (Section~\ref{sec:6}). We are not aware of any linear algebra literature that has previously studied this topic, and there are a number of issues to still address.

\begin{problem}
For what hidden dimension $d$ can we decompose any given matrix via a Diagonal-Orthogonal-Diagonal Decomposition?
\end{problem}

We saw via our numerical tests that taking $d$ to be the maximum of the two dimensions of a matrix and applying our algorithm did not result in an exact DODD, but rather convergence to a suboptimal approximation. When $d$, however, was strictly greater than both dimensions, we saw that the error output at each iteration of our algorithm continuously decreased, and given either a sufficiently large maximum threshold of iterations, or given a sufficiently large $d$ for efficient convergence, an exact solution was generally achieved. It remains an open problem to find exactly the minimum such hidden dimension $d$ relative to the dimensions of a given matrix such that an exact DODD exists.

Assuming an exact solution exists, we solved the square case for the DODD, i.e., the case where the hidden dimension $d$ is equal to both dimensions of the given matrix. The question remains for exactly what family of matrices this is the case.

\begin{problem}
Describe the set of matrices (as a set cut out by polynomial equations) that have a square Diagonal-Orthogonal-Diagonal Decomposition.
\end{problem}

More broadly, we began our study with a focus on the decomposition of orthogonal tensor networks in general. As a specific network form with many modern applications, and an ability to describe large families of tensors efficiently, tensor trains were the primary case we considered, and due to fruitful progress resulted in being the main focus of this paper. However, we remain interested in other, more general tensor networks.

\begin{problem}
Can we decompose orthogonal tensor networks for any general tensor network? For example, can we find an orthogonal tensor ring decomposition~\cite{Jianfeng}? Can our methods for orthogonal tensor trains be extended to orthogonal tensor trees?
\end{problem}

The primary motivation for this work is to find a structured decomposition that applies to all tensors. Orthogonal tensor train decompositions do not apply to all tensors of a given size. As discussed earlier, the rank of a tensor decomposing this way can be up to $\mathcal O(n^{d-2})$ while the generic rank of a dimension-$n$ order-$d$ tensor is $\mathcal O(n^{d-1})$. Orthogonality will, of course, make the orthogonal tensor train decomposition even more restrictive. So, a natural question is which orthogonal tensor networks can represent all tensors of a given size. Another direction for future work is to devise a method which approximately decomposes a given tensor as a tensor train.

\begin{problem}
What are the simplest orthogonal tensor networks according to which any tensor of a given size can be decomposed? Given a tensor and a tensor network diagram, can we approximately decompose the tensor according to that diagram?
\end{problem}

Along the lines of the previous question, it would be interesting to give an implicit description of the set of tensors that decompose according to a given orthogonal tensor network.

\begin{problem}
Find the polynomial equations that define the set of tensors that decompose according to a given orthogonal tensor network. Is the set of such tensors Zariski closed?
\end{problem}

Finding the eigenvectors and singular vector tuples of a general tensor is an NP-hard problem~\cite{HL}. However, the problem is easy for the family of orthogonally decomposable tensors~\cite{Robeva, RobSei}. How about for orthogonal tensor networks?

\begin{problem}
Given a tensor decomposing according to an orthogonal tensor network, can we find its singular vector tuples efficiently? Can we describe the set of all of its singular vector tuples in terms of its orthogonal tensor network decomposition?
\end{problem}

Answering these questions would provide fundamental progress towards finding a structured tensor decomposition for any tensor, analogous to the singular value decomposition for matrices.

\section*{Acknowledgements}  KH was supported  by a summer WLIURA grant, TM was supported by an NSERC USRA summer grant, and ER was supported by an NSERC  Discovery Grant (DGECR-2020-00338).

\bibliographystyle{ieeetr}
\bibliography{sample.bib}
%\printbibliography[heading=none]

\end{document}